\documentclass[11pt]{amsart}
\usepackage{amscd}
\usepackage{amsmath}
\usepackage{amsxtra}
\usepackage{amsfonts}
\usepackage{amssymb}
\usepackage{graphicx}

\oddsidemargin  0.0in
	\evensidemargin 0.0in
	\textwidth      6.5in
	\headheight     0.0in
	\topmargin      0.0in

\newtheorem{theorem}{Theorem}[section]
\newtheorem{corollary}[theorem]{Corollary}

\newtheorem{proposition}[theorem]{Proposition}

\newtheorem{conjecture}[theorem]{Conjecture}
\theoremstyle{definition}
\newtheorem{definition}[theorem]{Definition}
\newtheorem{remark}[theorem]{Remark}

\newtheorem{example}[theorem]{Example}
\theoremstyle{remark}

\renewcommand{\theclaim}{\textup{\theclaim}}

\newtheorem*{acknowledgements}{Acknowledgements}

\numberwithin{equation}{section}

\def\openone%{\hbox{\upshape \small1\kern-3.3pt\normalsize1}}

{\mathchoice

{\hbox{\upshape \small1\kern-3.3pt\normalsize1}}

{\hbox{\upshape \small1\kern-3.3pt\normalsize1}}

{\hbox{\upshape \tiny1\kern-2.3pt\SMALL1}}

{\hbox{\upshape \Tiny1\kern-2pt\tiny1}}}

\makeatletter

\newbox\ipbox

\newcommand{\ip}[2]{\left\langle #1\, , \,#2\right\rangle}
\newcommand{\diracb}[1]{\left\langle #1\mathrel{\mathchoice

{\setbox\ipbox=\hbox{$\displaystyle \left\langle\mathstrut
#1\right.$}

\vrule height\ht\ipbox width0.25pt depth\dp\ipbox}

{\setbox\ipbox=\hbox{$\textstyle \left\langle\mathstrut
#1\right.$}

\vrule height\ht\ipbox width0.25pt depth\dp\ipbox}

{\setbox\ipbox=\hbox{$\scriptstyle \left\langle\mathstrut
#1\right.$}

\vrule height\ht\ipbox width0.25pt depth\dp\ipbox}

{\setbox\ipbox=\hbox{$\scriptscriptstyle \left\langle\mathstrut
#1\right.$}

\vrule height\ht\ipbox width0.25pt depth\dp\ipbox}

}\right. }

\newcommand{\dirack}[1]{\left. \mathrel{\mathchoice

{\setbox\ipbox=\hbox{$\displaystyle \left.\mathstrut
#1\right\rangle$}

\vrule height\ht\ipbox width0.25pt depth\dp\ipbox}

{\setbox\ipbox=\hbox{$\textstyle \left.\mathstrut
#1\right\rangle$}

\vrule height\ht\ipbox width0.25pt depth\dp\ipbox}

{\setbox\ipbox=\hbox{$\scriptstyle \left.\mathstrut
#1\right\rangle$}

\vrule height\ht\ipbox width0.25pt depth\dp\ipbox}

{\setbox\ipbox=\hbox{$\scriptscriptstyle \left.\mathstrut
#1\right\rangle$}

\vrule height\ht\ipbox width0.25pt depth\dp\ipbox}

} #1\right\rangle}

\newcommand{\beq}{\begin{equation}}

\newcommand{\eeq}{\end{equation}}

\newcommand{\Ds}{\mathsf{D}}

\newcommand{\cj}[1]{\overline{#1}}

\newcommand{\bz}{\mathbb{Z}}

\newcommand{\br}{\mathbb{R}}
\newcommand{\bc}{\mathbb{C}}

\def\blfootnote{\xdef\@thefnmark{}\@footnotetext}

%\input hooklonguparrow
%\input cyracc.def
%\font\eightcyr=wncyr8
%\def\cyr{\eightcyr\cyracc}

\renewcommand{\mod}{\operatorname{mod}}

\hyphenation{wave-lets}\hyphenation{ in-fi-nite}\hyphenation{ con-vo-lu-tion}

\input xy
\xyoption{all}
\usepackage{amssymb}

%    Absolute value notation

%%\newcommand{\Span}[1]{\text{span}\{#1\}}

%\def\dr{{\frac{d}{dz_r}}}
%\def\del{\partial}
%\def\0{{\iota_0}}
%\def\C{\mathbb{C}}

\def\T{\mathcal{T}}

\def\F{\mathcal{F}}

\def\-{^{-1}}

%\def\A{\mathcal{A}}

%\def\F{\mathbb{F}}

%\def\V{\mathcal{V}}

%\def\U{\mathcal{U}}
%\def\fock{\Gamma(\H)}
%\def\pB{\varphi_B}

%\def\i{\iota_\in I}

%\def\r{\rightarrow}
%\def\L{\mathcal{L}}
%\def\tA{\tilde{A}}
%\def\tB{\tilde{B}}
%\def\tC{\tilde{C}}
%\def\tipi{\tilde{\pi}}

%    Blank box placeholder for figures (to avoid requiring any
%    particular graphics capabilities for printing this document).

\begin{document}

\title[Local translations associated to spectral sets]{Local translations associated to spectral sets}
\author{Dorin Ervin Dutkay}

\address{[Dorin Ervin Dutkay] University of Central Florida\\
	Department of Mathematics\\
	4000 Central Florida Blvd.\\
	P.O. Box 161364\\
	Orlando, FL 32816-1364\\
U.S.A.\\} \email{Dorin.Dutkay@ucf.edu}

\author{John Haussermann}

\address{[John Haussermann] University of Central Florida\\
	Department of Mathematics\\
	4000 Central Florida Blvd.\\
	P.O. Box 161364\\
	Orlando, FL 32816-1364\\
U.S.A.\\} \email{jhaussermann@knights.ucf.edu}

\thanks{} 
\subjclass[2000]{42A16,05B45,15B34}
\keywords{Spectrum, tile, Hadamard matrix, Fuglede conjecture, local translations}

\begin{abstract}
In connection to the Fuglede conjecture, we study groups of local translations associated to spectral sets, i.e., measurable sets in $\br$ or $\bz$ that have an orthogonal basis of exponential functions. We investigate the connections between the groups of local translations on $\bz$ and on $\br$ and present some examples for low cardinality. We present some relations between the group of local translations and tilings. 
\end{abstract}
\maketitle \tableofcontents

\section{Introduction}
In his study of commuting self-adjoint extensions of partial differential operators, Fuglede proposed the following conjecture \cite{Fug74}: 
\begin{conjecture}
Denote by $e_\lambda(x)=e^{2\pi i\lambda\cdot x}$, $\lambda,x\in\br^d$. Let $\Omega$ be a Lebesgue measurable subset of $\br^d$ of finite positive measure. There exists a set $\Lambda$ such that $\{e_\lambda : \lambda\in\Lambda\}$ is an orthogonal basis in $L^2(\Omega)$ if and only if $\Omega$ tiles $\br^d$ by translations. 
\end{conjecture}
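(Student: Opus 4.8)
The plan is to attack the two implications separately, using as a common engine the two Fourier-analytic reformulations of the hypotheses. First I would record that $\Lambda$ is a spectrum for $\Omega$ precisely when the orthogonality relations $\widehat{\chi_\Omega}(\lambda-\lambda')=0$ hold for all distinct $\lambda,\lambda'\in\Lambda$ together with the completeness identity $\sum_{\lambda\in\Lambda}\abs{\widehat{\chi_\Omega}(\xi-\lambda)}^2=\abs{\Omega}^2$ for a.e.\ $\xi$; equivalently, the single nonnegative function $\abs{\widehat{\chi_\Omega}}^2$ tiles $\br^d$ under translation by $\Lambda$. Dually, $\Omega$ tiles $\br^d$ by a translation set $T$ exactly when $\sum_{t\in T}\chi_\Omega(x-t)=1$ a.e., i.e.\ $\chi_\Omega*\delta_T=1$. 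Both hypotheses are therefore tiling statements, and the goal is to convert a tiling of $\chi_\Omega$ into a tiling of $\abs{\widehat{\chi_\Omega}}^2$ and back.

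The anchor case is lattices. If $\Omega\oplus L=\br^d$ for a full-rank lattice $L$, so that $\Omega$ is a fundamental domain, then identifying $L^2(\Omega)$ with $L^2(\br^d/L)$ shows that the exponentials indexed by the dual lattice $L^{*}$ form an orthogonal basis; conversely, if $L^{*}$ is a spectrum then the completeness identity forces $\chi_\Omega*\delta_{L}=1$ and $\Omega$ tiles by $L$. This settles the equivalence whenever the relevant object is a lattice, and it identifies the duality $L\leftrightarrow L^{*}$ as the mechanism I want to extend. The strategy for the general tiling-implies-spectral direction is then a reduction: first replace an arbitrary measurable tiling set $T$ by a periodic one sharing a common period lattice $L$ (via an averaging argument on the tiling function $\chi_\Omega*\delta_T$), pass to the quotient torus $\br^d/L$ where the tiling becomes a finite combinatorial partition, and build the spectrum inside $L^{*}$ by solving the resulting finite system of orthogonality equations governed by the discrete zero set of $\widehat{\chi_\Omega}$.

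For the converse spectral-implies-tiling direction I would run the same machine in reverse: from a spectrum $\Lambda$ I would use the completeness identity to confine $\Lambda$ to a lattice of the appropriate co-volume $\abs{\Omega}^{-1}$, reduce the statement that $\abs{\widehat{\chi_\Omega}}^2$ tiles by $\Lambda$ to a finite problem on a torus, and transport it through the duality back to a tiling of $\chi_\Omega$. The main obstacle, and the crux of the whole problem, is the periodicity reduction itself: there is no a priori reason that a measurable tiling set or an arbitrary spectrum must be periodic, and genuinely aperiodic configurations can occur, so the step ``replace $T$ (or $\Lambda$) by a periodic set'' cannot be performed for free and must instead be extracted from the rigidity forced by $\Omega$ having finite positive measure and by the discreteness of the zero set of $\widehat{\chi_\Omega}$. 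Controlling this rigidity uniformly as the dimension $d$ grows --- where the number of independent orthogonality constraints and the combinatorial complexity of torus partitions both grow rapidly --- is where I expect the argument to be hardest, and it is precisely the regime in which every such reduction must be handled with the greatest care.
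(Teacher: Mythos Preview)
The statement you are trying to prove is the Fuglede \emph{conjecture}, not a theorem, and the paper offers no proof of it. Indeed, immediately after stating it the paper records that Tao disproved the spectral $\Rightarrow$ tile implication for $d\geq 5$, and that both implications are now known to fail for $d\geq 3$. So any general proof strategy must contain a genuine error, and the task is to locate it in your outline.

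The failure point is precisely the step you flag as ``the main obstacle'': the periodicity reduction. You propose to replace an arbitrary tiling set $T$ (or spectrum $\Lambda$) by a periodic one via an averaging argument, then pass to a torus and solve a finite combinatorial problem. This reduction is not available. There is no averaging procedure that converts a measurable tiling of $\br^d$ by a set $\Omega$ into a periodic tiling by the \emph{same} set $\Omega$; averaging $\chi_\Omega*\delta_T$ against translates will at best produce a soft tiling by a density, not a new discrete translation set. Likewise, a spectrum $\Lambda$ satisfying $\sum_{\lambda\in\Lambda}\abs{\widehat{\chi_\Omega}(\xi-\lambda)}^2=\abs{\Omega}^2$ need not lie in any lattice of covolume $\abs{\Omega}^{-1}$, and there is no mechanism in your sketch that forces it to. The counterexamples in dimensions $d\geq 3$ are built exactly by producing finite spectral sets in $\bz_n^d$ that do not tile, and then lifting; your ``finite combinatorial partition on the torus'' step would have to solve these finite problems, and they are unsolvable. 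The lattice case you describe is correct and was already in Fuglede's original paper, but the bridge from the general case to the lattice case does not exist.

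In dimension $d=1$, which is the focus of this paper, the conjecture remains open; even there your periodicity reduction is not known to hold, and the paper's discussion of rationality of spectra (Proposition \ref{pr3.11} and the surrounding theorems) is precisely about the difficulty of establishing such structure.
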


\begin{definition}\label{def0.1}
Let $\Omega$ be a Lebesgue measurable subset of $\br^d$ of finite, positive measure. We say that $\Omega$ is a {\it spectral} set if there exists a set $\Lambda$ in $\br^d$ such that $\{e_\lambda : \lambda\in\Lambda\}$ is an orthogonal basis in $L^2(\Omega)$. In this case, $\Lambda$ is called a {\it spectrum} for $\Omega$. 

We say that $\Omega$ {\it tiles $\br^d$ by translations} if there exists a set $\T$ in $\br^d$ such that the sets $\Omega+t$, $t\in\T$ form a partition of $\br^d$, up to measure zero. 

\end{definition}

Terrence Tao \cite{Tao04} has proved that spectral-tile implication in the Fuglede conjecture is false in dimensions $d\geq 5$ and later both directions were disproved in dimensions $d\geq3$, see \cite{KM06}. At the moment the conjecture is still open in both directions for dimensions 1 and 2. In this paper, we will only focus on dimension $d=1$.

Recent investigations have shown that the Fuglede can be reduced to analogous statements in $\bz$, see \cite{DL13}. 

\begin{definition}\label{def0.2}
Let $A$ be a finite subset of $\bz$, $|A|=N$. We say that $A$ is {\it spectral } if there exists a set $\Gamma$ in $\br$ such that $\{e_\gamma : \gamma\in\Gamma\}$ is an orthogonal basis in $l^2(A)$, equivalently, the matrix 
\begin{equation}
\frac1{\sqrt{N}}\left(e^{2\pi i \gamma a}\right)_{\gamma\in\Gamma,a\in A}
\label{eq0.2.1}
\end{equation}
is unitary. This matrix is called {\it the Hadamard matrix associated to the pair} $(A,\Gamma)$.

We say that $A$ {\it tiles $\bz$ by translations} if there exists a set $\T$ in $\bz$ such that the sets $A+t$, $t\in\T$ forms a partition of $\bz$. 
\end{definition}

The Fuglede conjecture for $\bz$ can be formulated as follows
\begin{conjecture}\label{conjz}
A finite subset $A$ of $\bz$ is spectral if and only if it tiles $\bz$ by translations.
\end{conjecture}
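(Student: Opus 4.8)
The statement labelled Conjecture~\ref{conjz} is the Fuglede conjecture for $\bz$; it is an open problem, and the present paper does not resolve it but rather develops the theory of local translations as a tool bearing on it. What follows is therefore not a proof but an outline of how one would attack the two implications and of where the obstruction lies in each.

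\emph{Tile $\Rightarrow$ spectral.} The plan is to pass from $\bz$ to a finite cyclic group: a finite tile of $\bz$ tiles with a periodic complement, so $A$ may be viewed inside some $\bz_n$, and one encodes $A$ by its mask polynomial $A(x)=\sum_{a\in A}x^{a}$. One then appeals to the Coven--Meyerowitz conditions (T1) and (T2) on the cyclotomic divisors of $A(x)$: Coven and Meyerowitz proved that every tile satisfies (T1) and that (T1) and (T2) together imply tiling, while {\L}aba proved that (T1) and (T2) together imply spectrality, by exhibiting an explicit spectrum assembled from the prime-power cyclotomic divisors of $A(x)$. Hence this direction would follow from the statement that every tile of $\bz$ satisfies (T2). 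That is known when the cardinality $|A|$ has at most two distinct prime factors but is open in general, and that is the whole difficulty.

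\emph{Spectral $\Rightarrow$ tile.} Again one reduces (after the standard reductions, cf.\ \cite{DL13}) to a cyclic group and exploits the vanishing sums forced by orthogonality: if $\Gamma$ is a spectrum for $A$, then every difference of two distinct spectral points is a root of $A(x)$, so $\Gamma$ is tightly constrained by the prime-power cyclotomic divisors of $A(x)$, and one wants to manufacture a tiling complement of $A$ out of the same divisors. This is precisely where local translations enter: the group of local translations of $A$ records the partial translation symmetries that any spectrum, or any tiling complement, of $A$ must respect, so understanding this group, its behaviour under the passage between $\bz$ and $\br$, and its relation to tilings — the content of this paper — is a step toward converting spectral data into a tiling.

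The main obstacle, and the reason the conjecture remains open, is essentially the same in both directions: for sets $A$ whose cardinality has three or more distinct prime factors there is at present no method either to promote the cyclotomic and local-translation structure of a spectral set to an honest tiling complement, or to verify (T2) for an arbitrary tile; a resolution of either would close the corresponding implication.
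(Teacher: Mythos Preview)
You are correct: the statement is a conjecture, not a theorem, and the paper offers no proof of it. There is therefore nothing to compare against; the paper states Conjecture~\ref{conjz} as motivation and then develops the machinery of local translations (Theorems~\ref{th2.6}, \ref{th3.1}, Propositions~\ref{pr1.5}, \ref{pr3.9}, \ref{pr3.11}) as tools bearing on it, without claiming to settle either implication.

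Your sketch of the Coven--Meyerowitz/(T1)--(T2) and {\L}aba approach to tile $\Rightarrow$ spectral, and of the cyclotomic/vanishing-sum obstructions for spectral $\Rightarrow$ tile, is accurate background but is external to the paper: none of that material appears here. The paper's own contribution toward the conjecture is more modest and structural --- it shows that spectrality of $A$ is equivalent to the existence of a local translation matrix $B$ (Theorem~\ref{th2.6}), relates $B$ to possible tiling complements via the set $\Theta_B$ (Proposition~\ref{pr3.9}), and ties rationality of the spectrum to periodicity of $B$ (Proposition~\ref{pr3.11}). So your answer is right in substance, though it should be read as context rather than as a reconstruction of anything the paper proves.
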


As shown in \cite{DL13} the tile-spectral implications for $\br$ and $\bz$ are equivalent:
\begin{theorem}
Every bounded tile in $\br$ is spectral if and only if every tile in $\bz$ is spectral.
\end{theorem}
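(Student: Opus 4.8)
The plan is to prove the equivalence ``every bounded tile in $\br$ is spectral $\iff$ every tile in $\bz$ is spectral'' by establishing a dictionary between bounded tiles in $\br$ and tiles in $\bz$, and transporting spectra across it. The fundamental fact I would invoke is a theorem of this type (due essentially to work on structure of tilings by Lagarias--Wang and refined in \cite{DL13}): a bounded measurable set $\Omega\subset\br$ that tiles $\br$ by translations is, up to rescaling and measure zero, of the form $\Omega=A+[0,1)$ for some finite set $A\subset\bz$, and moreover the complementary tiling set may be taken inside a lattice, so that the real-line tiling is ``quasi-periodic'' and reduces to a tiling of $\bz$ by $A$ (or a translate thereof) on an appropriate residue structure. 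Conversely, if $A\subset\bz$ is finite and tiles $\bz$ by translations, then $\Omega_A:=A+[0,1)$ is a bounded set that tiles $\br$.

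Granting this dictionary, the forward direction goes as follows. Assume every bounded tile in $\br$ is spectral, and let $A\subset\bz$ be a finite set that tiles $\bz$. Then $\Omega_A=A+[0,1)$ is a bounded tile in $\br$, hence spectral: there is $\Gamma\subset\br$ with $\{e_\gamma:\gamma\in\Gamma\}$ an orthogonal basis of $L^2(\Omega_A)$. The point is that a spectrum for $A+[0,1)$ can be ``reduced mod $1$'': since $\widehat{\openone_{[0,1)}}(\xi)=0$ exactly when $\xi\in\bz\setminus\{0\}$, orthogonality of $e_\gamma, e_{\gamma'}$ on $\Omega_A$ forces either $\gamma-\gamma'\in\bz\setminus\{0\}$ or $\sum_{a\in A}e^{2\pi i(\gamma-\gamma')a}=0$. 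One then shows (a counting/linear-independence argument, using $|\Gamma|=|A|=N$) that the set of residues $\{\gamma \bmod 1:\gamma\in\Gamma\}$, or rather the exponentials $e^{2\pi i\gamma a}$ restricted to $a\in A$, yield a unitary Hadamard matrix of the form \eqref{eq0.2.1}, i.e. $A$ is spectral in the sense of Definition \ref{def0.2}. For the reverse direction, assume every tile in $\bz$ is spectral and let $\Omega\subset\br$ be a bounded tile. Using the dictionary, rescale so $\Omega$ corresponds (up to measure zero and translation) to $A+[0,1)$ with $A\subset\bz$ a finite tile of $\bz$. By hypothesis $A$ is spectral with spectrum $\Gamma\subset\br$. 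Then I claim $\Gamma+\bz$ (equivalently, $\{\gamma+k:\gamma\in\Gamma,k\in\bz\}$) is a spectrum for $A+[0,1)$: the exponentials $e_{\gamma+k}$ on $A+[0,1)$ tensor-decompose, the ``$\bz$-part'' $\{e_k\}$ being an orthonormal basis of $L^2([0,1))$ and the ``$A$-part'' $\{e_\gamma|_A\}$ being the given orthonormal basis of $l^2(A)$, so completeness and orthogonality on $L^2(A+[0,1))=l^2(A)\otimes L^2([0,1))$ follow. Hence $\Omega$ is spectral.

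The main obstacle I anticipate is the dictionary itself, specifically the claim that a \emph{bounded} tile of $\br$ can be normalized to the form $A+[0,1)$ with $A$ a genuine tile of $\bz$. A bounded tile need not a priori be a union of unit intervals aligned to a lattice; one must first show any bounded tiling set is commensurable, i.e. its tiling complement $\T$ can be taken to be periodic (a union of cosets of a lattice $L\bz$), and then rescale by $1/L$ and discretize. This rationality/periodicity step — that bounded tiles of $\br$ tile by a periodic set — is exactly the content one needs from \cite{DL13} (or Lagarias--Wang), and invoking it cleanly is the crux; the rest is the tensor-product bookkeeping sketched above. A secondary technical point is checking that the ``reduce mod $1$'' map on spectra in the forward direction does not collapse distinct elements of $\Gamma$ to the same residue when restricted to $A$ — this is handled by the observation that $|\Gamma|=|A|$ and that the rows of \eqref{eq0.2.1} must remain orthogonal, forcing the associated matrix to be an honest $N\times N$ unitary.
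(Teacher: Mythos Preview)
The paper does not prove this theorem at all; it is simply quoted as a result of \cite{DL13}, so there is no in-paper argument to compare against. Your outline is essentially the argument one expects in \cite{DL13}: the easy direction (``$\bz\Rightarrow\br$'') via the tensor identification $L^2(A+[0,1))\cong l^2(A)\otimes L^2[0,1)$, and the hard direction via the Lagarias--Wang structure theorem for bounded one-dimensional tiles (periodicity of the tiling set and, after rescaling, reduction to a finite union of unit intervals indexed by a set $A\subset\bz$ that tiles $\bz$). You correctly flag that last reduction as the crux.

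One genuine slip: in your forward direction you write ``using $|\Gamma|=|A|=N$''. A spectrum $\Gamma$ for the positive-measure set $A+[0,1)\subset\br$ is necessarily infinite, so this counting does not parse as written. The correct statement is that $\Gamma$, once known to exist, can be shown to have exactly $N$ distinct residues modulo $1$, and those residues form a spectrum for $A$ in the sense of Definition~\ref{def0.2}; equivalently (as the paper notes just below the theorem) one shows directly that $A+[0,1]$ is spectral in $\br$ iff $A$ is spectral in $\bz$. Your orthogonality dichotomy $\widehat{\chi_{[0,1)}}(\gamma-\gamma')=0$ versus $\sum_{a\in A}e^{2\pi i(\gamma-\gamma')a}=0$ is the right starting point for this, but the finishing step is a density/completeness argument on the infinite set $\Gamma$, not a finite count.
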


It is not clear if the spectral-tile implications for $\br$ and $\bz$ are equivalent. It is known that, if every spectral set in $\br$ is a tile, then every spectral set in $\bz$ is a tile in $\bz$. It was shown in \cite{DL13}, that the reverse also holds under some extra assumptions. 

\begin{theorem}
Suppose every bounded spectral set $\Omega$ in $\br$, of Lebesgue measure $|\Omega|=1$, has a rational spectrum, $\Lambda\subset\mathbb Q$. Then the spectral-tile implications for $\br$ and for $\bz$ are equivalent, i.e., every bounded spectral set in $\br$ is a tile if and only if every spectral set in $\bz$ is a tile. 
\end{theorem}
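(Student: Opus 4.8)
The statement splits into two implications. The implication ``every bounded spectral set in $\br$ is a tile $\Rightarrow$ every spectral set in $\bz$ is a tile'' is the one already recorded above as known, and it uses no extra hypothesis; I would prove it through the standard dictionary between a finite set $A\subset\bz$ and $A+[0,1)\subset\br$. Given a finite spectral $A$ with spectrum $\Gamma$ — which, since $a\mapsto e^{2\pi i\gamma a}$ only sees $\gamma$ mod $1$, may be taken inside $[0,1)$ with $|\Gamma|=|A|$ — one checks that $\Omega:=A+[0,1)$ is spectral with spectrum $\Gamma+\bz$: writing $\widehat{\chi_\Omega}=\widehat{\chi_{[0,1)}}\cdot m_A$ with $m_A(\xi)=\sum_{a\in A}e^{-2\pi i a\xi}$ (which is $1$-periodic), orthogonality of $\Gamma+\bz$ holds because $\widehat{\chi_{[0,1)}}$ kills nonzero integer frequencies while $m_A$ kills differences of distinct elements of $\Gamma$, and completeness follows from $\sum_m|\widehat{\chi_{[0,1)}}(\xi-m)|^2=1$ together with $\sum_{\gamma\in\Gamma}|m_A(\xi-\gamma)|^2=|A|^2$. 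If $\Omega$ then tiles $\br$ by a set $\T$, the identity $\chi_\Omega=\sum_{a\in A}\chi_{[0,1)}(\cdot-a)$ forces $A\oplus\T$ to be a translate of $\bz$, hence $A$ tiles $\bz$. So the real content of the theorem is the reverse implication, and this is where the rational-spectrum hypothesis enters.

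Assume then that every spectral subset of $\bz$ tiles $\bz$, and let $\Omega\subset\br$ be bounded and spectral; rescaling we may assume $|\Omega|=1$, and translating, $\Omega\subset[0,R)$. By hypothesis $\Omega$ has a rational spectrum $\Lambda\subset\Q$. The plan is to reduce to the case $\Omega=\tfrac1q(S+[0,1))$ for a finite $S\subset\bz$ and then invoke the dictionary again. Two facts drive the reduction. First, a spectrum of a set inside $[0,R)$ is $\tfrac1{4R}$-separated: if $0<|\delta|<\tfrac1{4R}$ then $\cos(2\pi\delta x)>0$ throughout $[0,R)$, so $\operatorname{Re}\widehat{\chi_\Omega}(\delta)=\int_\Omega\cos(2\pi\delta x)\,dx>0$ and $e_\lambda\not\perp e_{\lambda+\delta}$; in particular $\Lambda$ is uniformly discrete. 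Second, if a spectrum lies in $\tfrac1q\bz$, the exponentials are $q$-periodic, and comparing the pieces of $L^2(\Omega)$ coming from the windows $[kq,(k+1)q)$ shows that the $q$-periodization of $\chi_\Omega$ is $\le 1$ a.e., so one may translate $\Omega$ into $[0,q)$. The crux is to combine uniform discreteness, the rationality of $\Lambda$, and the orthogonality relation $\Lambda-\Lambda\subset\{0\}\cup Z(\widehat{\chi_\Omega})$ — with $\widehat{\chi_\Omega}$ entire of exponential type because $\Omega$ is bounded — to extract a common denominator $q$ for a translate of $\Lambda$, and then to use the expansion $1=\sum_{\lambda\in\Lambda}\widehat{\chi_\Omega}(\lambda)\,e_\lambda$, valid a.e.\ on $\Omega$, to conclude that $\chi_\Omega$ is, up to measure zero, constant on each interval $[\tfrac jq,\tfrac{j+1}q)$, i.e.\ $\Omega=\tfrac1q(S+[0,1))$ with $S\subset\bz$ finite.

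Granting this rationalization, the conclusion follows quickly: since spectrality is dilation invariant, $S+[0,1)$ is spectral, hence by the dictionary $S$ is spectral in $\bz$; by hypothesis $S$ tiles $\bz$, say $S\oplus\T=\bz$; then $(S+[0,1))\oplus\T=\br$ and therefore $\Omega=\tfrac1q\big((S+[0,1))\oplus\T\big)=\br$ is a tiling. I expect the rationalization step to be the real obstacle — specifically, passing from ``$\Lambda$ rational'' to ``$\Lambda$ has a common denominator'' and then transferring rationality from $\Lambda$ to $\Omega$ itself; this is exactly where boundedness of $\Omega$ (through the exponential-type behaviour and the discreteness of the zero set of $\widehat{\chi_\Omega}$) must be used essentially, whereas everything downstream is the routine unit-interval dictionary for both spectrality and tiling.
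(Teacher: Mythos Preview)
The paper does not contain a proof of this theorem; it is quoted in the introduction as a result from \cite{DL13}. There is thus no in-paper argument to compare your attempt against, so I comment on the proposal directly.

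Your overall plan --- reduce to $\Omega=\tfrac1q(S+[0,1))$ for some finite $S\subset\bz$ and then apply the $\bz$-hypothesis via the unit-interval dictionary --- is the right shape, and the dictionary steps you wrote out are correct. The problems lie inside the ``rationalization step'', which you yourself flag as the crux but do not carry out. First, the ingredients you list (uniform discreteness of $\Lambda$, the inclusion $\Lambda-\Lambda\subset\{0\}\cup Z(\widehat{\chi_\Omega})$, and $\widehat{\chi_\Omega}$ entire of exponential type) do not by themselves produce a common denominator; the missing input is the theorem that spectra of bounded sets in $\br$ are \emph{periodic}. Combined with $0\in\Lambda\subset\Q$ and $\operatorname{dens}(\Lambda)=|\Omega|=1$, periodicity forces the minimal period to be a positive integer $N$, so $\Lambda=F+N\bz$ with $F$ finite rational, whence a common denominator. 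Second, the assertion that $\chi_\Omega$ is a.e.\ constant on each $[\tfrac jq,\tfrac{j+1}q)$ is not justified by the expansion $1=\sum_\lambda\widehat{\chi_\Omega}(\lambda)e_\lambda$ on $\Omega$, which says nothing about $\chi_\Omega$ off $\Omega$; and your ``translate $\Omega$ into $[0,q)$'' is really a $q\bz$-rearrangement rather than a rigid translation, after which you still owe an argument that a tiling of the rearranged set transfers back to $\Omega$. These structural points are exactly what the work in \cite{DL13} supplies, and your outline does not provide substitutes for them.
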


All known examples of spectral sets of Lebesgue measure 1 have a rational spectrum. There is another, stronger variation of the spectral-tile implication in $\bz$ which is equivalent to the spectral-tile implication in $\br$. 
\begin{definition}
We say that a set $\Lambda$ has {\it period} $p$ if $\Lambda+p=\Lambda$. The smallest positive $p$ with this property is called the {\it minimal period}. 
\end{definition}

\begin{theorem}\cite{DL13}
The following statements are equivalent
\begin{enumerate}
	\item Every bounded spectral set in $\br$ is a tile. 
	\item For every finite union of intervals $\Omega = A+[0,1]$ with $A\subset {\mathbb Z}$,  if $\Lambda$ is a spectrum of $\Omega$ with minimal period $\frac{1}{N}$, then $\Omega$ tiles ${\mathbb R}$ with a tiling set ${\mathcal T}\subset N{\mathbb Z}$.
\end{enumerate}
\end{theorem}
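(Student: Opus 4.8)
The plan is to establish the two implications separately; throughout I normalize by a translation so that $0\in\Lambda$, and, where convenient, by a dilation so that $|\Omega|=1$.

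\emph{The implication $(1)\Rightarrow(2)$.} Let $\Omega=A+[0,1]$ with $A\subset\Z$ finite, and let $\Lambda$ be a spectrum of $\Omega$ with minimal period $\frac1N$. Since $\Omega$ is a bounded spectral set, $(1)$ produces a set $\mathcal S$ with $\Omega+\mathcal S=\R$ a tiling. First I would push $\mathcal S$ into $\Z$: the building block of $\Omega$ being a union of unit intervals with integer endpoints, the fractional parts of the points of $\mathcal S$ must all coincide (otherwise two unit tile-intervals with different offsets overlap near a shared endpoint), so after a harmless translation $\mathcal S\subset\Z$, i.e.\ $A\oplus\mathcal S=\Z$. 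Next I would read off the arithmetic content of the periodic spectrum: from $0\in\Lambda$ and $\Lambda+\frac1N=\Lambda$ we get $\frac1N\Z\subseteq\Lambda$, so orthogonality forces $\widehat{1_\Omega}(k/N)=0$ for every $k\in\Z\setminus\{0\}$; writing $\widehat{1_\Omega}(\xi)=\widehat{1_{[0,1]}}(\xi)\,A(e^{-2\pi i\xi})$ with $A(x)=\sum_{a\in A}x^{a}$, and using that $\widehat{1_{[0,1]}}$ vanishes only on $\Z\setminus\{0\}$, one finds that $1+x+\cdots+x^{N-1}$ divides $A(x)$ in $\Z[x]$; equivalently, $A$ is equidistributed modulo $N$, i.e.\ $|A\cap(j+N\Z)|=|A|/N$ for every $j$.

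The \textbf{main obstacle} in this direction is the remaining step: upgrading the tiling $A\oplus\mathcal S=\Z$ to one whose complement lies in $N\Z$, using only equidistribution modulo $N$. Splitting $A=\bigsqcup_{j=0}^{N-1}A_{j}$ with $A_{j}=A\cap(j+N\Z)$ and setting $\widetilde A_{j}=(A_{j}-j)/N$, a set $\mathcal C\subset\Z$ with $A\oplus N\mathcal C=\Z$ is exactly a common tiling complement of $\widetilde A_{0},\dots,\widetilde A_{N-1}$; since equal-size tiles of $\Z$ need not share a common complement, the hypothesis that $A$ itself tiles must be used. Concretely, I would pass to a periodic tiling $A\oplus\mathcal S=\Z_{M}$ with $N\mid M$, slice it along residue classes modulo $N$ to obtain, for each $r$, a decomposition $\Z=\bigsqcup_{j}(\widetilde A_{j}\oplus\widetilde{\mathcal S}_{r-j})$, and then combine these slices into a single $\mathcal C$ valid for every $\widetilde A_{j}$ --- via a mask-polynomial identity modulo $x^{M}-1$, or via the Coven--Meyerowitz structure $(T1)$, $(T2)$ (available because $\Omega$ is a tile); then $\mathcal T=N\mathcal C$ is the desired tiling set. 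Passing from this ``generalized tiling'' of the slices to a genuine common complement is the delicate point.

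\emph{The implication $(2)\Rightarrow(1)$.} Let $\Omega\subset\R$ be an arbitrary bounded spectral set; after a dilation, $|\Omega|=1$. The idea is to approximate $\Omega$ by sets to which $(2)$ applies. Using the $\R\leftrightarrow\Z$ reduction machinery of \cite{DL13}, for each large $n$ one constructs a finite set $A_{n}\subset\Z$ and a \emph{periodic} set $\Lambda_{n}\subset\R$, of minimal period $\frac1{N_{n}}$, such that $A_{n}+[0,1]$ is spectral with spectrum $\Lambda_{n}$ and $\Omega_{n}:=\frac1n(A_{n}+[0,1])$ approximates $\Omega$. Applying $(2)$, $A_{n}+[0,1]$ tiles $\R$ with a tiling set in $N_{n}\Z$, hence $\Omega_{n}$ tiles $\R$ with a tiling set $\mathcal T_{n}\subset\frac{N_{n}}{n}\Z$, necessarily of density $1/|\Omega_{n}|\to1$. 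Finally I would let $n\to\infty$ and, using compactness of the space of locally finite point configurations, extract a limit of the pairs $(\Omega_{n},\mathcal T_{n})$ and check it is a genuine tiling of $\R$ by $\Omega$.

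Here the \textbf{main obstacle} is twofold. First, constructing the spectral approximants $\Omega_{n}$ with a \emph{periodic} spectrum and with usable control on $N_{n}$ is the technical core of the $\R\leftrightarrow\Z$ reduction of \cite{DL13}, and is where the precise shape of $(2)$ --- unit intervals over the integers, periodic spectrum, tiling set in $N\Z$ --- is forced on us. Second, the limiting step requires genuine care: one must prevent the tiling sets $\mathcal T_{n}$ from spreading out (so the limit is a locally finite configuration rather than, say, Lebesgue measure), and it is precisely the containment $\mathcal T_{n}\subset\frac{N_{n}}{n}\Z$, together with the density normalization $|\Omega_{n}|\to1$, that is meant to provide this control and let the compactness argument close --- without the $N\Z$-clause in $(2)$ this would fail.
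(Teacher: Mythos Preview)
The paper does not prove this theorem; it is quoted from \cite{DL13} as a background result, with no argument given in the present paper. There is therefore nothing here to compare your attempt against.

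On the substance of your attempt: you have correctly laid out the architecture of both directions and, just as correctly, isolated the two real difficulties --- but you have not resolved either of them, and you say so yourself. In $(1)\Rightarrow(2)$, the passage from ``$A$ tiles $\bz$'' together with ``$A$ is equidistributed modulo $N$'' to ``$A$ tiles $\bz$ with a complement contained in $N\bz$'' is precisely the nontrivial step; your observation that equal-size tiles of $\bz$ need not share a common complement shows why the slicing-and-recombining sketch is not yet an argument, and invoking Coven--Meyerowitz $(T1),(T2)$ here is circular unless you independently verify those conditions. In $(2)\Rightarrow(1)$, the reduction of an arbitrary bounded spectral set to one of the form $A+[0,1]$ with a \emph{periodic} spectrum, together with the control on $N_n$ needed for the limiting step, is the technical heart of \cite{DL13}, and your compactness outline does not supply it. So what you have written is an accurate roadmap with the hard parts honestly flagged, but not a proof.
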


We should note here that for a finite set $A$ in $\bz$, the set $A+[0,1]$ is spectral in $\br$ if and only if $A$ is spectral in $\bz$. Also, if $\Gamma$ is a spectrum for $A$ then $\Lambda:=\Gamma+\bz$ is a spectrum of $A+[0,1]$, therefore $\Lambda$ has period 1 and the minimal period will have to be of the form $\frac1N$. See \cite{DL13} for details.

The spectral property of a set, in either $\br$ or $\bz$, can be characterized by the existence of a certain unitary group of local translations. We will describe this in the next section and present some properties of these groups. In Theorem \ref{th2.6} we give a characterization of spectral sets in $\bz$ in terms of the existence of groups of local translations or of a local translation matrix. In Proposition \ref{pr1.5} we establish a formula that connects the local translation matrix for a spectral set $A$ in $\bz$ and the group of local translations for the spectral set $A+[0,1]$ in $\br$. Proposition \ref{pr3.9} shows how one can look for tiling sets for the spectral set $A$ using the local translation matrix. Proposition \ref{pr3.11} shows that the rationality of the spectrum is characterized by the periodicity of the group of local translations.

%One of the main ingredients that we will use is the fact that any spectrum is {\it periodic} (see \cite{BoMa11,IoKo12}).  
%\begin{definition}\label{def3.8}
%If $\Omega$, is spectral then any spectrum $\Lambda$ is {\it periodic} with some period $p\neq0$, i.e., $\Lambda+p=\Lambda$, and $p$ is an integer multiple of $\frac{1}{|\Omega|}$. We call $p$ a period for $\Lambda$. If $p=\frac{k(p)}{|\Omega|}$ with $k(p)\in\bn$, then $\Lambda$ has the form
%
%\begin{equation}
%\Lambda=\{\lambda_0,\dots,\lambda_{k(p)-1}\}+p\bz,
%\label{eq3.8.1}
%\end{equation}  
%with $\lambda_0,\dots,\lambda_{k(p)-1}\in[0,p)$, see \cite{BoMa11,IoKo12}. The reason that there are $k(p)$ elements of $\Lambda$ in the interval $[0,p)$ can be seen also from the fact that the Beurling density of a spectrum $\Lambda$ has to be ${|\Omega|}$, see \cite{Lan67a}.
%
%
%\end{definition}
%
%\begin{remark}\label{rem4.1}
%According to \cite{IoKo12}, if $\Omega$ has Lebesgue measure 1 and is spectral with spectrum $\Lambda$, with $0\in\Lambda$, then $\Lambda$ is periodic, the period $p$ is an integer and $\Lambda$ has the form
%\begin{equation}
%\Lambda=\{\lambda_0=0,\lambda_1,\dots,\lambda_{p-1}\}+p\bz,
%\label{eq4.1}
%\end{equation}
%where $\lambda_i$ in $[0,p)$ are some distinct real numbers.
%\end{remark}

\section{Local translations}
In this section we introduce the unitary groups of local translations associated to spectral sets. These are one-parameter groups of unitary operators on $L^2(\Omega)$, for subsets $\Omega$ of $\br$, or on $l^2(A)$, for subsets of $\bz$, which act as translations on $\Omega$ or $A$ whenever such translations are possible. The existence of such groups was already noticed earlier by Fuglede \cite{Fug74} and \cite{Ped87}. They were further studied in \cite{DJ12u}. The idea is the following: the existence of an orthonormal basis $\{e_\lambda: \lambda\in\Lambda\}$ allows the construction of the Fourier transform from $L^2(\Omega)$ to $l^2(\Lambda)$. On $l^2(\Lambda)$ one has the unitary group of modulation operators, i.e., multiplication by $e_t$, or the diagonal matrix with entries $e^{2\pi i t\lambda}$, $\lambda\in\Lambda$. Conjugating via the Fourier transform we obtain the unitary group of local translations. For further information on local translations and connections to self-adjoint extensions and scattering theory, see \cite{JPT12a,JPTa,JPTc,JPTd}.

\begin{definition}\label{def3.3} 
Let $\Omega$ be a bounded Borel subset of $\br$. 
A {\it unitary group of local translations} on $\Omega$ is a strongly continuous one parameter unitary group $U(t)$ on $L^2(\Omega)$ with the property that for any $f\in L^2(\Omega)$ and any $t\in\br$,
\begin{equation}
(U(t)f)(x)=f(x+t)\mbox{ for a.e }x\in \Omega\cap(\Omega-t)
\label{eq3.3.1}
\end{equation}

If $\Omega$ is spectral with spectrum $\Lambda$, we define the Fourier transform $\mathcal F:L^2(\Omega)\rightarrow l^2(\Lambda)$ 
\begin{equation}\label{eq3.3.2}
\F f=\left(\ip{f}{\frac{1}{\sqrt{|\Omega|}}e_\lambda}\right)_{\lambda\in\Lambda},\quad(f\in L^2(\Omega)).
\end{equation}
We define the unitary group of local translations associated to $\Lambda$ by 
\begin{equation}
U_\Lambda(t)=\F^{-1}\hat U_\Lambda(t)\F\mbox{ where }\hat U_\Lambda(t)(a_\lambda)_{\lambda\in\Lambda}=(e^{2\pi i\lambda t}a_\lambda)_{\lambda\in\Lambda},\quad ((a_\lambda)\in l^2(\Lambda)).
\label{eq3.3.3}
\end{equation}
\end{definition}

\begin{proposition}\label{pr3.4}
With the notations in Definition \ref{def3.3}
\begin{equation}
U_\Lambda(t)e_\lambda=e_\lambda(t)e_\lambda,\quad(t\in\br,\lambda\in\Lambda)
\label{eq3.4.1}
\end{equation}
Thus the functions $e_\lambda$ are the eigenvectors for the operators $U(t)$ corresponding to the eigenvalues $e^{2\pi i \lambda t}$ with multiplicity one.
\end{proposition}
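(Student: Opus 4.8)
The plan is simply to unwind the definitions. Everything reduces to the observation that, since $\{e_\lambda:\lambda\in\Lambda\}$ is an orthogonal basis of $L^2(\Omega)$, the family $\{\frac1{\sqrt{|\Omega|}}e_\lambda:\lambda\in\Lambda\}$ is an \emph{orthonormal} basis, so the Fourier transform $\mathcal F$ of \eqref{eq3.3.2} carries it onto the canonical orthonormal basis $\{\delta_\mu:\mu\in\Lambda\}$ of $l^2(\Lambda)$, where $\delta_\mu$ is the sequence with a $1$ in position $\mu$ and $0$ elsewhere. Concretely, for each $\mu\in\Lambda$ one has $\mathcal F e_\mu=\sqrt{|\Omega|}\,\delta_\mu$, which is immediate from \eqref{eq3.3.2} and orthogonality.

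Next I would apply the diagonal operator $\hat U_\Lambda(t)$ from \eqref{eq3.3.3}. Since $\hat U_\Lambda(t)\delta_\mu=e^{2\pi i\mu t}\delta_\mu$, we get $\hat U_\Lambda(t)\mathcal F e_\mu=e^{2\pi i\mu t}\sqrt{|\Omega|}\,\delta_\mu=e^{2\pi i\mu t}\,\mathcal F e_\mu$. Applying $\mathcal F^{-1}$ and using $U_\Lambda(t)=\mathcal F^{-1}\hat U_\Lambda(t)\mathcal F$ gives $U_\Lambda(t)e_\mu=e^{2\pi i\mu t}e_\mu=e_\mu(t)e_\mu$, which is exactly \eqref{eq3.4.1}.

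For the eigenvector and multiplicity-one assertion, I would use that $\mathcal F$ is a unitary equivalence intertwining the family $\{U_\Lambda(t)\}_{t\in\br}$ with the family of diagonal operators $\{\hat U_\Lambda(t)\}_{t\in\br}$ on $l^2(\Lambda)$, so it suffices to analyse the latter. If $v=(a_\mu)_{\mu}\in l^2(\Lambda)$ is nonzero and satisfies $\hat U_\Lambda(t)v=c(t)v$ for all $t\in\br$ and some scalars $c(t)$, then picking $\mu_0$ with $a_{\mu_0}\neq0$ forces $c(t)=e^{2\pi i\mu_0 t}$, and then any other $\mu$ with $a_\mu\neq0$ would satisfy $e^{2\pi i\mu t}=e^{2\pi i\mu_0 t}$ for all $t$, hence $\mu=\mu_0$ since $\mu\mapsto(t\mapsto e^{2\pi i\mu t})$ is injective on $\br$. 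Thus $v\in\C\,\delta_{\mu_0}$, i.e. the joint eigenspace for the character $t\mapsto e^{2\pi i\lambda t}$ is the one-dimensional space $\C\,\delta_\lambda$; transporting back by $\mathcal F^{-1}$ shows $\C\,e_\lambda$ is the corresponding eigenspace for $\{U_\Lambda(t)\}$.

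There is essentially no obstacle here: the proposition is a direct consequence of Definition \ref{def3.3}. The only point worth stating explicitly is the distinctness of the one-parameter characters $t\mapsto e^{2\pi i\lambda t}$ for distinct $\lambda\in\Lambda$, which is what pins the multiplicity down to exactly one rather than merely at least one.
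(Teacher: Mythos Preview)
Your proof is correct and follows the same approach as the paper: compute $\mathcal F e_\lambda=\sqrt{|\Omega|}\,\delta_\lambda$ and observe that the rest is a straightforward computation. You spell out the multiplicity-one argument in more detail than the paper does, but the underlying idea is identical.
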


\begin{proof}
Clearly $\F e_\lambda=\sqrt{|\Omega|}\delta_\lambda$ for all $\lambda\in\Lambda$. The rest follows from a simple computation. 
\end{proof}
\begin{theorem}\label{th4.1}
Let $\Omega$ be a bounded Borel subset of $\br$. Assume that $\Omega$ is spectral with spectrum $\Lambda$. Let $U_\Lambda$ be the associated unitary group as in Definition \ref{def3.3}. Then $U:=U_\Lambda$ is a unitary group of local translations.

\end{theorem}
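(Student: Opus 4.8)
The plan is to combine Proposition~\ref{pr3.4} with a density argument: on the dense linear span of the exponentials $\{e_\lambda:\lambda\in\Lambda\}$ the translation identity \eqref{eq3.3.1} holds in the strongest possible sense (pointwise, for every $x$), and this is then transported to all of $L^2(\Omega)$ by approximation.

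First I would dispose of the structural claims. Since $\Lambda$ is a spectrum, $\{\frac{1}{\sqrt{|\Omega|}}e_\lambda\}_{\lambda\in\Lambda}$ is an orthonormal basis of $L^2(\Omega)$, so the Fourier transform $\F$ of \eqref{eq3.3.2} is unitary; and $\hat U_\Lambda(t)$ of \eqref{eq3.3.3} is plainly a one-parameter unitary group on $l^2(\Lambda)$, strongly continuous because $\|\hat U_\Lambda(t)a-a\|^2=\sum_\lambda|e^{2\pi i\lambda t}-1|^2|a_\lambda|^2\to 0$ as $t\to 0$ by dominated convergence (the summands are dominated by $4|a_\lambda|^2$). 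Conjugating, $U_\Lambda(t)=\F^{-1}\hat U_\Lambda(t)\F$ is a strongly continuous one-parameter unitary group on $L^2(\Omega)$, so it only remains to verify \eqref{eq3.3.1}.

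Next I would check \eqref{eq3.3.1} on a finite linear combination $f=\sum_\lambda c_\lambda e_\lambda$. By linearity and Proposition~\ref{pr3.4}, $U_\Lambda(t)f=\sum_\lambda c_\lambda e^{2\pi i\lambda t}e_\lambda$; since $e^{2\pi i\lambda t}e_\lambda(x)=e^{2\pi i\lambda(x+t)}=e_\lambda(x+t)$ for all $x$, the class $U_\Lambda(t)f$ has the genuine (continuous) function $x\mapsto f(x+t)$ as a representative, so $(U_\Lambda(t)f)(x)=f(x+t)$ holds for \emph{every} $x\in\br$, in particular for a.e.\ $x\in\Omega\cap(\Omega-t)$.

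Finally, for general $f\in L^2(\Omega)$ I would approximate: pick finite linear combinations $f_n\to f$ in $L^2(\Omega)$, whence $U_\Lambda(t)f_n\to U_\Lambda(t)f$ in $L^2(\Omega)$, and pass to a subsequence along which $f_n\to f$ a.e.\ on $\Omega$ and $U_\Lambda(t)f_n\to U_\Lambda(t)f$ a.e.\ on $\Omega$. For $x\in\Omega\cap(\Omega-t)$ outside the relevant null sets we then have $(U_\Lambda(t)f_n)(x)\to(U_\Lambda(t)f)(x)$, and since $x+t\in\Omega$ and the translate by $-t$ of the null set $\{y\in\Omega:f_n(y)\not\to f(y)\}$ is again null, also $f_n(x+t)\to f(x+t)$ for a.e.\ such $x$; combining these with $(U_\Lambda(t)f_n)(x)=f_n(x+t)$ from the previous step and letting $n\to\infty$ yields $(U_\Lambda(t)f)(x)=f(x+t)$ for a.e.\ $x\in\Omega\cap(\Omega-t)$. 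The real content is Proposition~\ref{pr3.4}; the only delicate point is this last step's almost-everywhere bookkeeping --- passing to a common subsequence along which both $f_n$ and $U_\Lambda(t)f_n$ converge pointwise a.e., and tracking the null set pushed around by the translation $x\mapsto x+t$ --- which I would regard as the (mild) main obstacle, everything else being formal.
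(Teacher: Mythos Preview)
Your proof is correct. The paper itself does not supply a proof of Theorem~\ref{th4.1}: the statement is given without argument, the result being treated as known from the earlier literature referenced in the surrounding discussion. So there is no paper proof against which to compare.

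Your route---checking the translation identity on the exponentials $e_\lambda$ via Proposition~\ref{pr3.4}, noting it then holds pointwise (for every $x$) on finite linear combinations, and extending to all of $L^2(\Omega)$ by an $L^2$-approximation plus a.e.-convergent subsequence argument---is the standard one and is carried out cleanly. The one place requiring care is the last step, and you handle it correctly: passing to a \emph{common} subsequence along which both $f_n\to f$ and $U_\Lambda(t)f_n\to U_\Lambda(t)f$ converge a.e.\ on $\Omega$, and using translation invariance of Lebesgue measure so that the exceptional set for $f_n(x+t)\to f(x+t)$ on $\Omega\cap(\Omega-t)$ is still null. Nothing is missing.
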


 In the particular case when $\Omega$ is a finite union of intervals the converse also holds:
\begin{theorem}\label{th3.1}\cite{DJ12u}
The set $\Omega=\cup_{i=1}^n(\alpha_i,\beta_i)$ is spectral if and only if there exists a strongly continuous one parameter unitary group $(U(t))_{t\in\br}$ on $L^2(\Omega)$ with the property that, for all $t\in\br$ and $f\in L^2(\Omega)$:
\begin{equation}
(U(t)f)(x)=f(x+t),\mbox{ for almost every }x\in \Omega\cap(\Omega-t).
\label{eq3.1.1}
\end{equation}
Moreover, given the unitary group $U$, the spectrum $\Lambda$ of the self-adjoint infinitesimal generator $\Ds$ of the group $U(t)=e^{2\pi i t\Ds}$ (as in Stone's theorem), is a spectrum for $\Omega$. 
\end{theorem}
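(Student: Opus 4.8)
The plan is to prove the two implications separately. The forward implication requires no new work: if $\Omega$ is spectral with spectrum $\Lambda$, then by Theorem \ref{th4.1} the group $U_\Lambda$ of Definition \ref{def3.3} is a strongly continuous one-parameter unitary group on $L^2(\Omega)$ satisfying \eqref{eq3.1.1}, which is exactly the group we are asked to produce. All the content, together with the ``moreover'' statement, lies in the converse.

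So assume $(U(t))_{t\in\br}$ is a strongly continuous one-parameter unitary group on $L^2(\Omega)$ satisfying \eqref{eq3.1.1}, and write $U(t)=e^{2\pi i t\Ds}$ with $\Ds$ self-adjoint, as in Stone's theorem. The first step is to show that $\Ds$ is one of the self-adjoint extensions of the minimal operator $D_{\min}$, the closure of $\frac{1}{2\pi i}\frac{d}{dx}$ defined on functions smooth and compactly supported in the interior of $\Omega$. If $\phi$ is such a function, then for $|t|$ sufficiently small the support of $\phi(\cdot+t)$ is contained in $\Omega\cap(\Omega-t)$, and a comparison of $L^2$-norms (both $U(t)\phi$ and $\phi(\cdot+t)$ have norm $\|\phi\|$, and they already agree on $\Omega\cap(\Omega-t)$, which carries the full mass of $\phi(\cdot+t)$) upgrades \eqref{eq3.1.1} to the identity $U(t)\phi=\phi(\cdot+t)$ on all of $\Omega$; since $t\mapsto\phi(\cdot+t)$ is $L^2$-differentiable at $0$, this gives $\phi\in\mathcal D(\Ds)$ and $\Ds\phi=\frac1{2\pi i}\phi'$. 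Hence $D_{\min}\subseteq\Ds=\Ds^*\subseteq D_{\min}^*=D_{\max}$, so $\Ds$ acts as $\frac1{2\pi i}\frac{d}{dx}$ on a domain of functions that are absolutely continuous with $L^2$ derivative on each of the $n$ component intervals.

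Next I would invoke the standard theory of $\frac{1}{2\pi i}\frac{d}{dx}$ on a finite union of $n$ bounded intervals: $D_{\min}$ has deficiency indices $(n,n)$, and every self-adjoint extension has compact resolvent (its domain, with the graph norm, embeds compactly into $L^2(\Omega)$ by Rellich), hence purely discrete spectrum with eigenfunctions forming an orthonormal basis of $L^2(\Omega)$. It then remains to identify the eigenfunctions. If $\Ds f=\lambda f$ with $\lambda\in\br$, then $f$ solves $\frac1{2\pi i}f'=\lambda f$ on each component interval $(\alpha_i,\beta_i)$, so $f=c_i\,e_\lambda$ there for a constant $c_i$. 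To see that the $c_i$ all agree, fix $i\neq j$ and pick $t$ with $\alpha_j-\beta_i<t<\beta_j-\alpha_i$ so that the open set $S=\{x\in(\alpha_i,\beta_i):x+t\in(\alpha_j,\beta_j)\}$ is nonempty; for $x\in S$ we have $x\in\Omega\cap(\Omega-t)$, and since $U(t)f=e^{2\pi i\lambda t}f$, \eqref{eq3.1.1} gives $c_j\,e_\lambda(x+t)=e^{2\pi i\lambda t}c_i\,e_\lambda(x)$, i.e. $c_i=c_j$. Thus each eigenfunction is a scalar multiple of a single exponential $e_\lambda$, every eigenvalue is simple, and if $\Lambda$ denotes the spectrum of $\Ds$ then $\{\,|\Omega|^{-1/2}e_\lambda:\lambda\in\Lambda\,\}$ is an orthonormal basis of $L^2(\Omega)$. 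This shows $\Omega$ is spectral with spectrum $\Lambda$, which is the spectrum of $\Ds$, completing the converse and the ``moreover'' clause.

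I expect the main obstacle to be the spectral-theoretic step: showing that the self-adjoint generator $\Ds$, which a priori is only known to extend $D_{\min}$, necessarily has purely discrete spectrum with no continuous part. This is where the finite-union-of-intervals hypothesis is essential — it supplies the concrete differential-operator structure, finite deficiency indices, and the compact resolvent — and for a general bounded Borel set the converse fails precisely because this structure is unavailable (the set $\Omega\cap(\Omega-t)$ can be null for all $t\neq 0$, making \eqref{eq3.1.1} vacuous). A more routine but still delicate point is the first step, passing from the pointwise a.e.\ identity \eqref{eq3.1.1} to genuine $L^2$-differentiability of $t\mapsto U(t)\phi$ for test functions $\phi$.
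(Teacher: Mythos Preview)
The paper does not actually supply a proof of this theorem: it is stated with the citation \cite{DJ12u} and no argument is given in the text, so there is no ``paper's own proof'' to compare against. That said, your proposal is a correct and essentially complete proof along the lines one finds in the cited literature (Fuglede \cite{Fug74}, Pedersen \cite{Ped87}, and \cite{DJ12u}).

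Your argument hits the right milestones: (1) the forward direction is indeed immediate from Theorem \ref{th4.1}; (2) the norm-comparison trick to upgrade the a.e.\ identity \eqref{eq3.1.1} to $U(t)\phi=\phi(\cdot+t)$ globally for compactly supported test functions, and hence $D_{\min}\subseteq\Ds\subseteq D_{\max}$, is correct; (3) the invocation of deficiency indices $(n,n)$ and compact resolvent (via Rellich) for $\frac{1}{2\pi i}\frac{d}{dx}$ on a finite union of bounded intervals is standard and is exactly where the hypothesis on $\Omega$ enters; and (4) the eigenfunction identification, using \eqref{eq3.1.1} together with $U(t)f=e^{2\pi i\lambda t}f$ to match the constants $c_i=c_j$ across components, is clean and correct. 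Your self-diagnosis of the delicate points---discreteness of the spectrum and the passage from the a.e.\ relation to $L^2$-differentiability---is accurate, and you handle both adequately.
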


\begin{remark}
As shown in \cite{DJ12}, and actually even in the motivation of Fuglede \cite{Fug74} for his studies of spectral sets, the self-adjoint operator $\Ds$ appearing in Theorem \ref{th3.1} are self-adjoint extensions of the differential operator $\frac{1}{2\pi i }\frac d{dx}$ on $L^2(\Omega)$. 
\end{remark}

\begin{example}
The simplest example of a spectral set is $\Omega=[0,1]$ with spectrum $\Lambda=\bz$. In this case, the group of local translations is
$$(U(t)f)(x)=f((x+t)\mod 1),\quad (f\in L^2[0,1],x,t\in[0,1]).$$
This can be checked by verifying that $U(t)e_n=e_n(t)e_n$, $t\in\br$, $n\in\bz$. 
\end{example}
\begin{proposition}\label{pr1.4}
Let $\Omega=\cup_{i=1}^n(\alpha_i,\beta_i)$ be a spectral set with spectrum $\Lambda$. Let $E$ be a Lebesgue measurable subset of $\Omega$ and $t_0\in\br$  such that $E+t_0\subset \Omega$. Then 
\begin{equation}
U(-t_0)\chi_E=\chi_{E+t_0}.
\label{eq1.4.1}
\end{equation}
\end{proposition}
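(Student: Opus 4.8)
The plan is to reduce Proposition~\ref{pr1.4} to Proposition~\ref{pr3.4} by a density argument. The key observation is that the defining property \eqref{eq3.3.1} of the unitary group of local translations says precisely that $U(-t_0)$ acts as translation by $-t_0$ on the overlap $\Omega\cap(\Omega+t_0)$. Since $E+t_0\subset\Omega$ and obviously $E\subset\Omega$, we have $E+t_0\subset\Omega\cap(\Omega+t_0)$, so for $f\in L^2(\Omega)$ one has $(U(-t_0)f)(x)=f(x-t_0)$ for a.e.\ $x\in E+t_0$. The temptation is to apply this directly with $f=\chi_E$: then $(U(-t_0)\chi_E)(x)=\chi_E(x-t_0)=1$ for a.e.\ $x\in E+t_0$, which gives the values of $U(-t_0)\chi_E$ on $E+t_0$ but says nothing about its values on $\Omega\setminus(E+t_0)$.

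So the real content is to show that $U(-t_0)\chi_E$ vanishes a.e.\ outside $E+t_0$, equivalently that $U(-t_0)\chi_E$ is supported on $E+t_0$, which combined with the pointwise identity above and the fact that $U(-t_0)$ is an isometry (so $\|U(-t_0)\chi_E\|_2=\|\chi_E\|_2=\sqrt{|E|}=\sqrt{|E+t_0|}$) forces $U(-t_0)\chi_E=\chi_{E+t_0}$ exactly. First I would establish the support claim. Here I would use the eigenvector description: by Proposition~\ref{pr3.4}, $U(t)e_\lambda=e_\lambda(t)e_\lambda$, and since $U$ is the group associated to the spectrum $\Lambda$ via the Fourier transform $\F$, conjugating by $\F$ turns $U(t)$ into the diagonal modulation operator $\hat U_\Lambda(t)$. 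Thus $U(-t_0)=\F^{-1}\hat U_\Lambda(-t_0)\F$, and one can compute $U(-t_0)\chi_E$ by expanding $\chi_E$ in the orthonormal basis $\{\frac1{\sqrt{|\Omega|}}e_\lambda:\lambda\in\Lambda\}$, applying the modulation, and resumming.

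A cleaner route, which I expect to be the one that works most smoothly, is to run a duality/adjoint argument. Since $U(-t_0)$ is unitary with adjoint $U(-t_0)^*=U(t_0)$, it suffices to show that for every $g\in L^2(\Omega)$ supported on $\Omega\setminus(E+t_0)$, we have $\ip{U(-t_0)\chi_E}{g}=0$, i.e.\ $\ip{\chi_E}{U(t_0)g}=0$. Now $U(t_0)$ acts as translation by $t_0$ on $\Omega\cap(\Omega-t_0)$, and $E\subset\Omega\cap(\Omega-t_0)$ (because $E\subset\Omega$ and $E+t_0\subset\Omega$), so $(U(t_0)g)(x)=g(x+t_0)$ for a.e.\ $x\in E$; but $x\in E$ means $x+t_0\in E+t_0$, where $g$ vanishes, hence $U(t_0)g=0$ a.e.\ on $E$ and the inner product is zero. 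This gives $\supp(U(-t_0)\chi_E)\subset\overline{E+t_0}$ up to measure zero, and then combining with the pointwise value $1$ on $E+t_0$ from \eqref{eq3.3.1} and the isometry norm equality finishes the proof. The main obstacle is simply being careful that \eqref{eq3.3.1} is only an a.e.\ statement and that ``support'' is meant in the measure-theoretic sense, so the argument must be phrased in terms of inner products or a.e.\ equalities rather than genuine pointwise support; once that bookkeeping is handled, the proof is short.
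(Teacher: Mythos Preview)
Your proposal is correct. Both the paper and you begin the same way: the local-translation property \eqref{eq3.3.1} with $t=-t_0$ gives $(U(-t_0)\chi_E)(x)=\chi_E(x-t_0)=1$ for a.e.\ $x\in E+t_0$, and the remaining work is to show vanishing on $\Omega\setminus(E+t_0)$.

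For that step the paper takes the shorter route you mention in passing but do not emphasize: it simply uses unitarity of $U(-t_0)$ to write
\[
\mu(E)=\|U(-t_0)\chi_E\|_2^2=\int_{E+t_0}|U(-t_0)\chi_E|^2+\int_{\Omega\setminus(E+t_0)}|U(-t_0)\chi_E|^2
=\mu(E+t_0)+\int_{\Omega\setminus(E+t_0)}|U(-t_0)\chi_E|^2,
\]
and since $\mu(E)=\mu(E+t_0)$ the last integral is zero. Your preferred argument is instead a duality computation: for $g$ supported in $\Omega\setminus(E+t_0)$ you show $\ip{\chi_E}{U(t_0)g}=0$ by applying \eqref{eq3.3.1} to $U(t_0)g$ on $E\subset\Omega\cap(\Omega-t_0)$. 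This is perfectly valid and arguably more transparent about \emph{why} the support is contained in $E+t_0$; the paper's norm trick is a one-line shortcut that exploits the same unitarity in a more implicit way. Note that once your duality argument gives the support conclusion, the norm equality you invoke afterward is redundant: support in $E+t_0$ together with the pointwise value $1$ on $E+t_0$ already yields $U(-t_0)\chi_E=\chi_{E+t_0}$ a.e.
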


\begin{proof}
Since $E+t_0$ is contained in $\Omega\cap(\Omega+t_0)$, by Theorem \ref{th3.1}, we have that, for almost every $x\in E+t_0$, $(U(-t_0)\chi_E)(x)=\chi_E(x-t_0)=\chi_{E+t_0}(x)$. Thus, if $g:=U(-t_0)\chi_E$, then 
$g(x)=\chi_{E+t_0}(x)$ for a.e., $x\in E+t_0$. On the other hand, since $U(-t_0)$ is unitary, we have that 
$\|g\|_{L^2}^2=\|\chi_E\|_{L^2}=\mu(E)$. But 
$$\mu(E)=\|g\|_{L^2}^2=\int_{E+t_0}|g(x)|^2\,dx+\int_{\Omega\setminus(\Omega+t_0)}|g(x)|^2\,dx=\mu(E+t_0)+\int_{\Omega\setminus(\Omega+t_0)}|g(x)|^2\,dx,$$
so $g(x)=0$ for a.e. $x\in \Omega\setminus(E+t_0)$.

\end{proof}

Next, we focus on spectral subsets of $\bz$ and define the one-parameter unitary group of local translations in an analogous way. As we will see, in this case, the parameter can be restricted from $\br$ to $\bz$ and thus the unitary group of local translations is determined by a local translation unitary matrix. 
\begin{definition}\label{def2.4}
Let $A$ be a finite subset of $\bz$. A {\it group of local translations} on $A$ is a continuous one-parameter unitary group $U(t)$, $t\in\br$ on $l^2(A)$ with the property that 
\begin{equation}
U(a-a')\delta_a=\delta_{a'},\quad( a,a'\in A)
\label{eq2.4.1}
\end{equation}
A unitary matrix $B$ on $l^2(A)$ is called a {\it local translation matrix} if 
\begin{equation}
B^{a-a'}\delta_a=\delta_{a'},\quad(a,a'\in A)
\label{eq2.4.2}
\end{equation}
If $A$ is a spectral subset of $\bz$ with spectrum $\Gamma$ and $|A|=N$, we define the Fourier transform from $l^2(A)$ to $l^2(\Gamma)$ by the matrix:
\begin{equation}
\F=\frac{1}{\sqrt{N}}\left(e^{-2\pi i \lambda a}\right)_{\lambda\in\Gamma,a\in A}.
\label{eq2.4.3}
\end{equation}
Let $D_\Gamma(t)$ be the diagonal matrix with entries $e^{2\pi i \lambda t}$, $\lambda\in\Gamma$. We define the group of local translations on $A$ associated to $\Gamma$ by 
\begin{equation}
U_\Gamma(t):=\F^*D_\Gamma(t)\F,\quad(t\in\br)
\label{eq2.4.4}
\end{equation}
The {\it local translation matrix} associated to $\Gamma$ is $B=U_\Gamma(1)$. 
\end{definition}

\begin{proposition}\label{pr4.4}
With the notations as in Definition \ref{def2.4}, 
\begin{equation}
U_\Gamma(t)e_\lambda=e_\lambda(t)e_\lambda,\quad Be_\lambda=e^{2\pi i\lambda}e_\lambda,\quad(\lambda\in\Gamma)
\label{eq4.4.1}
\end{equation}
Thus the vectors $e_\lambda$ in $l^2(A)$ are the eigenvectors of $B$ corresponding to the eigenvalues $e^{2\pi i \lambda}$ of multiplicity one. 

The matrix entries of $U_\Gamma(t)$ are 
\begin{equation}
U_\Gamma(t)_{aa'}=\frac1N\sum_{\lambda\in\Gamma}e^{2\pi i(a-a'+t)\lambda},\quad B_{aa'}=\frac1N\sum_{\lambda\in\Gamma}e^{2\pi i (a-a'+1)\lambda},\quad (a,a'\in A, t\in\br).
\label{eq4.4.2}
\end{equation}
\end{proposition}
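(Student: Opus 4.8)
The plan is to verify the two sets of formulas in Proposition~\ref{pr4.4} directly from the definitions, since everything reduces to unwinding the definition of $\F$ and of the modulation operators $D_\Gamma(t)$. First I would recall that by definition \eqref{eq2.4.4}, $U_\Gamma(t) = \F^* D_\Gamma(t)\F$, and observe that $\F e_\lambda = \sqrt N\,\delta_\lambda$ for $\lambda\in\Gamma$: indeed, the $\mu$-th entry of $\F e_\lambda$ is $\frac1{\sqrt N}\sum_{a\in A} e^{-2\pi i\mu a}e^{2\pi i\lambda a} = \frac1{\sqrt N}\sum_{a\in A}e^{2\pi i(\lambda-\mu)a}$, which equals $\sqrt N$ when $\mu=\lambda$ and $0$ when $\mu\neq\lambda$ by the orthogonality built into the fact that the matrix \eqref{eq0.2.1} is unitary (its columns, indexed by $\Gamma$, are orthonormal). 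Then $D_\Gamma(t)\F e_\lambda = \sqrt N\, e^{2\pi i\lambda t}\delta_\lambda$, and applying $\F^* = \F^{-1}$ gives $U_\Gamma(t)e_\lambda = e^{2\pi i\lambda t}e_\lambda = e_\lambda(t)e_\lambda$. Setting $t=1$ yields $Be_\lambda = e^{2\pi i\lambda}e_\lambda$. Since $\{e_\lambda:\lambda\in\Gamma\}$ is an orthogonal basis of the $N$-dimensional space $l^2(A)$ and $|\Gamma|=N$, these are all the eigenvectors and each eigenvalue $e^{2\pi i\lambda}$ has multiplicity one (strictly, one should note that distinct $\lambda,\lambda'\in\Gamma$ could in principle give the same eigenvalue, in which case ``multiplicity one'' is understood per spectral vector $e_\lambda$; this matches the phrasing in Proposition~\ref{pr3.4}).

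For the matrix entries \eqref{eq4.4.2}, I would simply compute $U_\Gamma(t)_{aa'} = \langle U_\Gamma(t)\delta_{a'},\delta_a\rangle = \langle D_\Gamma(t)\F\delta_{a'},\F\delta_a\rangle$. The $\lambda$-th entry of $\F\delta_{a'}$ is $\frac1{\sqrt N}e^{-2\pi i\lambda a'}$ and of $\F\delta_a$ is $\frac1{\sqrt N}e^{-2\pi i\lambda a}$, so
\[
U_\Gamma(t)_{aa'} = \sum_{\lambda\in\Gamma}\frac{1}{\sqrt N}e^{2\pi i\lambda t}e^{-2\pi i\lambda a'}\cdot\overline{\frac{1}{\sqrt N}e^{-2\pi i\lambda a}} = \frac1N\sum_{\lambda\in\Gamma}e^{2\pi i(a-a'+t)\lambda},
\]
and the formula for $B_{aa'}$ is the special case $t=1$.

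There is no real obstacle here; this is a routine verification. The only point requiring a little care is the orthogonality computation $\sum_{a\in A}e^{2\pi i(\lambda-\mu)a}=N\delta_{\lambda\mu}$ for $\lambda,\mu\in\Gamma$, which is exactly the statement that the Hadamard matrix \eqref{eq0.2.1} has orthonormal columns, i.e., that $\F\F^* = I_{l^2(\Gamma)}$ (equivalently $\F^*\F = I_{l^2(A)}$ since $\F$ is square and unitary). I would cite the unitarity of \eqref{eq0.2.1} from Definition~\ref{def0.2} for this, and also use it to justify writing $\F^* = \F^{-1}$ throughout. One should also make a brief remark that $U_\Gamma$ is genuinely a one-parameter group --- $U_\Gamma(s+t) = U_\Gamma(s)U_\Gamma(t)$ and strong continuity --- which is immediate since $D_\Gamma(s+t)=D_\Gamma(s)D_\Gamma(t)$ and $t\mapsto e^{2\pi i\lambda t}$ is continuous, though this may already be implicit in Definition~\ref{def2.4}.
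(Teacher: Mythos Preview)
Your proposal is correct and follows exactly the same approach as the paper: the paper's proof is the one-line ``We have $\F e_\lambda=\sqrt{N}\delta_\lambda$, for all $\lambda\in\Gamma$. The rest follows from an easy computation,'' and you have simply spelled out that easy computation. Your added remarks on unitarity of $\F$, the group property of $U_\Gamma$, and the caveat about multiplicity are reasonable elaborations but not required.
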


\begin{proof}
We have $\F e_\lambda=\sqrt{N}\delta_\lambda$, for all $\lambda\in\Gamma$. The rest follows from an easy computation. 
\end{proof}

\begin{theorem}\label{th2.5}
Let $A$ be a spectral subset of $\bz$ with spectrum $\Gamma$ and let $U_\Gamma$ be the unitary group associated to $\Gamma$ as in Definition \ref{def2.4}. Then $U_\Gamma$ is a group of local translations on $\Gamma$, i.e., equation \eqref{eq2.4.1} is satisfied. Also $B:=U_\Gamma(1)$ is a local translation matrix. 
\end{theorem}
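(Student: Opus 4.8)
The plan is to verify the two claims directly from the definitions, using the eigenvector description in Proposition \ref{pr4.4}. First I would establish the group property: $U_\Gamma(t)$ is a strongly continuous one-parameter unitary group on $l^2(A)$ because it is obtained by conjugating the diagonal unitary group $D_\Gamma(t)$ by the fixed unitary matrix $\F$ (which is unitary precisely because $A$ is spectral with spectrum $\Gamma$, as recorded in Definition \ref{def0.2}). Continuity and the homomorphism property $U_\Gamma(s+t)=U_\Gamma(s)U_\Gamma(t)$ are immediate from the corresponding facts for $t\mapsto D_\Gamma(t)$. So the only substantive point is to check the defining relation \eqref{eq2.4.1}, namely $U_\Gamma(a-a')\delta_a=\delta_{a'}$ for all $a,a'\in A$.

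To verify \eqref{eq2.4.1}, I would compute the matrix entries of $U_\Gamma(a-a')$ using formula \eqref{eq4.4.2} from Proposition \ref{pr4.4}: for $b,b'\in A$,
\begin{equation}
U_\Gamma(a-a')_{bb'}=\frac1N\sum_{\lambda\in\Gamma}e^{2\pi i(b-b'+a-a')\lambda}.
\label{eq:pp1}
\end{equation}
Applying this to the vector $\delta_a$ and reading off the $b'$-th coordinate of $U_\Gamma(a-a')\delta_a$ gives
\begin{equation}
\left(U_\Gamma(a-a')\delta_a\right)(b')=\frac1N\sum_{\lambda\in\Gamma}e^{2\pi i(a-b'+a-a')\lambda}.
\label{eq:pp2}
\end{equation}
Here I have to be slightly careful about index conventions, but after matching them correctly the exponent becomes $2\pi i(a-a'+a-b')\lambda$; the key algebraic identity to exploit is that the Hadamard matrix \eqref{eq0.2.1} being unitary says exactly that $\frac1N\sum_{\lambda\in\Gamma}e^{2\pi i(a-b)\lambda}=\delta_{a,b}$ for $a,b\in A$. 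Rearranging the exponent so that it has the form $(c-b')\lambda$ with $c\in A$ — which works because, when $b'$ ranges over $A$, the sum picks out the term where the two elements of $A$ appearing coincide — yields $\delta_{a',b'}$, i.e.\ $U_\Gamma(a-a')\delta_a=\delta_{a'}$, as desired. Equivalently, and perhaps more cleanly, one can avoid index bookkeeping entirely by expanding $\delta_a$ in the eigenbasis $\{e_\lambda:\lambda\in\Gamma\}$: since $\F e_\lambda=\sqrt N\,\delta_\lambda$, one has $\delta_a=\frac1N\sum_{\lambda\in\Gamma}\overline{e_\lambda(a)}\,e_\lambda$ (by unitarity of $\F$), and then using $U_\Gamma(t)e_\lambda=e_\lambda(t)e_\lambda$ from \eqref{eq4.4.1} gives $U_\Gamma(a-a')\delta_a=\frac1N\sum_\lambda e^{2\pi i(a-a')\lambda}\overline{e_\lambda(a)}e_\lambda=\frac1N\sum_\lambda \overline{e_\lambda(a')}e_\lambda=\delta_{a'}$.

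Finally, for the statement about $B:=U_\Gamma(1)$: since $a-a'\in\bz$ for $a,a'\in A$ and $U_\Gamma$ is a group, $U_\Gamma(a-a')=U_\Gamma(1)^{a-a'}=B^{a-a'}$, so \eqref{eq2.4.1} immediately gives $B^{a-a'}\delta_a=\delta_{a'}$, which is precisely the definition \eqref{eq2.4.2} of a local translation matrix. I expect the only mild obstacle to be getting the sign and index conventions consistent between \eqref{eq2.4.3}, \eqref{eq2.4.4}, \eqref{eq4.4.2}, and the definition of $\delta_a$; the eigenbasis computation sidesteps this and is the route I would actually write up.
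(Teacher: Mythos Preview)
Your proposal is correct, and your ``cleaner'' eigenbasis route is essentially the paper's proof in dual form: the paper simply computes $\F\delta_a=\frac{1}{\sqrt N}(e^{-2\pi i\lambda a})_{\lambda\in\Gamma}$, applies $D_\Gamma(a-a')$ to turn this into $\F\delta_{a'}$, and concludes $U_\Gamma(a-a')\delta_a=\delta_{a'}$---which is exactly your expansion $\delta_a=\frac1N\sum_\lambda\overline{e_\lambda(a)}e_\lambda$ followed by the eigenvalue relation, written on the Fourier side. Your first (matrix-entry) route also works once the indices are fixed---the correct computation gives $(U_\Gamma(a-a')\delta_a)(b')=U_\Gamma(a-a')_{b'a}=\frac1N\sum_\lambda e^{2\pi i(b'-a')\lambda}=\delta_{b',a'}$---but as you anticipated, the eigenbasis/Fourier-side argument is the one to write up.
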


\begin{proof}
We have $\F\delta_a=(e^{-2\pi i \lambda a})_{\lambda\in\Gamma}$. Then $D_\Gamma(a-a')\F\delta_a=(e^{-2\pi i \lambda a'})_{\lambda\in\Gamma}=\F\delta_{a'}$. Hence $U_\Gamma(a-a')\delta_a=\delta_{a'}$.
\end{proof}

The converse holds also in the case of subsets of $\bz$, i.e., the existence of a group of local translations, or of a local translation matrix guarantees that $A$ is spectral.

\begin{theorem}\label{th2.6}
Let $A$ be a finite subset of $\bz$. The following statements are equivalent:
\begin{enumerate}
	\item $A$ is spectral.
	\item There exists a unitary group of local translations $U(t)$, $t\in\br$, on $A$. 
	\item There exists a local translation matrix $B$ on $A$. 
	
	The correspondence from (i) to (ii) is given by $U=U_\Gamma$ where $\Gamma$ is a spectrum for $A$. The correspondence from (ii) to (iii) is given by $B=U(1)$. The correspondence from (iii) to (i) is given by: if $\{e^{2\pi i \lambda} : \lambda\in\Gamma\}$ is the spectrum of $B$ then $\Gamma$ is a spectrum for $A$. 
\end{enumerate}

\end{theorem}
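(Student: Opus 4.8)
The plan is to close the cycle (i)$\Rightarrow$(ii)$\Rightarrow$(iii)$\Rightarrow$(i). The first implication is already done: if $A$ is spectral with spectrum $\Gamma$, then Theorem \ref{th2.5} shows that $U_\Gamma$, defined via \eqref{eq2.4.4}, is a unitary group of local translations on $A$. For (ii)$\Rightarrow$(iii), suppose $U(t)$ is a unitary group of local translations on $A$. Setting $B:=U(1)$, the group law gives $B^{k}=U(k)$ for every integer $k$, so for $a,a'\in A$ we have $B^{a-a'}\delta_a=U(a-a')\delta_a=\delta_{a'}$ by \eqref{eq2.4.1}; hence $B$ satisfies \eqref{eq2.4.2} and is a local translation matrix. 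This step is essentially immediate from the one-parameter group structure.

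The substantive implication is (iii)$\Rightarrow$(i): from a local translation matrix $B$ we must produce a spectrum $\Gamma$ for $A$. Since $B$ is a unitary $N\times N$ matrix, by the spectral theorem it has an orthonormal eigenbasis; write its eigenvalues as $e^{2\pi i\lambda}$ for suitable real numbers $\lambda$, collected (with multiplicity, a priori) in a set $\Gamma$, and let $v_\lambda$ be a corresponding unit eigenvector. The key computation is to identify these eigenvectors with exponential vectors. Fix a base point $a_0\in A$; I would show that, up to a scalar, $v_\lambda$ must equal the vector $e_\lambda=(e^{2\pi i\lambda a})_{a\in A}$ (or its conjugate, depending on the sign convention fixed in \eqref{eq2.4.3}). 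The mechanism: relation \eqref{eq2.4.2} with $a'=a_0$ says $B^{a-a_0}\delta_a=\delta_{a_0}$, equivalently $\delta_a = B^{a_0-a}\delta_{a_0} = B^{-(a-a_0)}\delta_{a_0}$, so every $\delta_a$ is obtained from the single vector $\delta_{a_0}$ by applying an integer power of $B$. Expanding $\delta_{a_0}$ in the eigenbasis, $\delta_{a_0}=\sum_\lambda c_\lambda v_\lambda$, and applying $B^{a_0-a}$ gives $\delta_a=\sum_\lambda c_\lambda e^{2\pi i\lambda(a_0-a)}v_\lambda$. Taking the inner product with $v_\mu$ shows $\langle \delta_a, v_\mu\rangle = \overline{c_\mu}\,e^{2\pi i\mu(a_0-a)}$, i.e. the $a$-th coordinate of $v_\mu$ is a constant multiple of $e^{2\pi i\mu(a-a_0)}$. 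Hence each eigenvector is (a multiple of) $e_\mu$, and orthonormality of $\{v_\lambda\}$ translates exactly into orthogonality of $\{e_\lambda:\lambda\in\Gamma\}$ in $l^2(A)$; since there are $N=|A|$ of them, they form an orthogonal basis, so $\Gamma$ is a spectrum for $A$.

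The main obstacle is bookkeeping around the eigenvalue multiplicities and the passage from "eigenvalues $e^{2\pi i\lambda}$" to a well-defined set $\Gamma\subset\br$. Since the argument above shows every eigenvector is proportional to some $e_\mu$, and distinct $e_\mu$ (for $\mu$ in a fixed period interval) are linearly independent, each eigenvalue is automatically simple — so the multiplicity statement in Proposition \ref{pr4.4} is recovered rather than assumed. One must also check the two eigenbasis descriptions are consistent: starting from $B=U_\Gamma(1)$ built from a spectrum $\Gamma$, Proposition \ref{pr4.4} already gives $Be_\lambda=e^{2\pi i\lambda}e_\lambda$, so the correspondence (iii)$\to$(i) stated in the theorem is the inverse of (i)$\to$(iii) up to the ambiguity $\lambda\mapsto\lambda+\Zu$, which does not affect the vector $e_\lambda$ and hence is harmless. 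I would finish by remarking that any choice of real representatives $\lambda$ for the eigenvalues yields a valid spectrum, which is exactly the freedom reflected in the literature on Hadamard matrices.
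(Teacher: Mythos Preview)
Your proof is correct and follows essentially the same route as the paper's: diagonalize $B$, use the relation $B^{a-a'}\delta_a=\delta_{a'}$ to show that each unit eigenvector $v_\lambda$ is a nonzero scalar multiple of the exponential vector $e_\lambda$, and then read off that $\Gamma$ is a spectrum for $A$ from the orthonormality of the eigenbasis. The only cosmetic difference is that the paper packages the final step as a matrix identity $ST=I$ (with $S=(c(\lambda)e^{-2\pi i\lambda a})_{a,\lambda}$ and $T=(v_\lambda(a))_{\lambda,a}$) and deduces $|c(\lambda)|=1/\sqrt{N}$ and the orthogonality relations from unitarity of $S$, whereas you argue directly from $v_\lambda\propto e_\lambda$; your handling of the multiplicity issue is the same in substance.
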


\begin{proof}
The implications (i)$\Rightarrow$(ii)$\Rightarrow$(iii) were proved above. We focus on (iii)$\Rightarrow$(i). 
Let $\{e^{2\pi i\lambda} : \lambda\in\Gamma\}$ be the spectrum of the unitary matrix $B$, the eigenvalues repeated according to multiplicity and let $\{v_\lambda : \lambda\in\Gamma\}$ be an orthonormal basis of corresponding eigenvectors. Let $P_\lambda$ be the orthogonal projection onto $v_\lambda$. Then 
$$B^m=\sum_{\lambda\in\Gamma}e^{2\pi i\lambda m}P_\lambda,\quad (m\in\bz).$$
We have, from \eqref{eq2.4.2},
$$\delta_{a'}=\sum_\lambda e^{2\pi i \lambda(a-a')}P_\lambda\delta_a$$
so
$P_{\lambda}\delta_{a'}=e^{2\pi i\lambda(a-a')}P_{\lambda}\delta_a$ for all $\lambda\in\Gamma$ which implies that $e^{2\pi ia\lambda}P_\lambda\delta_a$ does not depend on $a$, so it is equal to $c(\lambda)v_\lambda$ for some $c(\lambda)\in\bc$. Then $P_\lambda\delta_a=e^{-2\pi i a\lambda}c(\lambda)v_\lambda$ for all $\lambda\in\Gamma$ and 
$$\delta_a=\sum_\lambda c(\lambda)e^{-2\pi i\lambda a}v_\lambda,$$
so
$$\sum_\lambda c(\lambda)e^{-2\pi i \lambda a}v_\lambda(a')=\delta_{aa'},\quad (a,a'\in A)$$
Consider the matrices $S=(c(\lambda)e^{-2\pi i\lambda a})_{a\in A,\lambda\in\Gamma}$ and $T=(v_\lambda(a))_{\lambda\in \Gamma,a\in A}$. The previous equation implies that $ST=I$ and since $T$ is unitary, we get that $S$ is also. But then the columns have unit norm so 
$$1=\sum_{a\in A} |c(\lambda)|^2$$
and this implies that $|c(\lambda)|=\frac1{\sqrt N}$. The fact that the rows are orthonormal means that 
$$\frac1 N\sum_{\lambda\in\Gamma}e^{2\pi i(a-a')\lambda}=\delta_{aa'}.$$
But this means, first, that all the $\lambda$'s are distinct and that $\Gamma$ is a spectrum for $A$. 
\end{proof}

\begin{remark}
Given the group of local translations $U(t)$, $t\in\br$, the local translation matrix is given by $B=U(1)$. Conversely, given the local translation matrix $B$, this defines $U$ on $\bz$ in a unique way $U(n)=B^n$, $n\in\bz$. However, there are many ways to interpolate this to obtain a local translation group depending on the real parameter $t$. One can pick some choices for $\Gamma$ such that $\{e^{2\pi i \lambda } :\lambda\in\Gamma\}$ is the spectrum of $B$. Then consider the spectral decomposition 
$$B=\sum_{\lambda\in\Gamma}e^{2\pi i \lambda} P_\lambda.$$
Define 
$$U(t)=\sum_{\lambda\in\Gamma} e^{2\pi i \lambda t} P_\lambda,\quad(t\in\br).$$
Note that $U(t)$ depends on the choice of $\Gamma$. Any two such choices $\Gamma$, $\Gamma'$ are congruent modulo $\bz$, and therefore the corresponding groups $U_\Gamma(t)$ and $U_{\Gamma'}(t)$ coincide for $t\in\bz$. 

\end{remark}

\begin{example}
The simplest example of a spectral set in $\bz$ is $A=\{0,1,\dots,N-1\}$ with spectrum $\Gamma=\{0,\frac1N,\dots,\frac{N-1}N\}$. The local translation matrix associated to $\Gamma$ is the permutation matrix:
$$B=\begin{pmatrix}
0&1&\dots&0\\
0&0&\ddots&0\\
\vdots&\vdots&\vdots&\vdots\\
0&0&\dots&1\\
1&0&\dots&0
\end{pmatrix}.
$$
To see this, it is enough to check that, for $k=0,\dots,N-1$, $$B\begin{pmatrix}e_{\frac kN}(0)\\ \vdots\\e_{\frac kN}(N-1)\end{pmatrix}=e^{2\pi i \frac kN}\begin{pmatrix}e_{\frac kN}(0)\\ \vdots\\e_{\frac kN}(N-1)\end{pmatrix}=\begin{pmatrix}e_{\frac kN}(1)\\e_{\frac kN}(2)\\ \vdots\\e_{\frac kN}(N-1)\\e_{\frac kN}(0)\end{pmatrix}.$$
\end{example}

In the next proposition we link the two concepts for $\bz$ and for $\br$: if $A$ is a spectral set in $\bz$, with spectrum $\Gamma$, then $\Omega=A+[0,1]$ is a spectral set in $\br$ with spectrum $\Lambda=\Gamma+\bz$. The local group of local translations for $\Omega$ and $\Lambda$ can be expressed in terms of the local translation matrix associated to $A$ and $\Gamma$. 

\begin{proposition}\label{pr1.5}
Let $A=\{a_0,\dots,a_{N-1}\}$ be a spectral set in $\bz$, with spectrum $\Gamma=\{\lambda_0,\dots,\lambda_{N-1}\}$. Then the set $\Omega=A+[0,1]$ is spectral in $\br$ with spectrum $\Lambda=\Gamma+[0,1]$. 
Define the matrix of the Fourier transform from $l^2(A)$ to $l^2(\Gamma)$: 
\begin{equation}
\F=\F_{A,\Gamma}=\frac{1}{\sqrt{N}}\left(e^{-2\pi i\lambda_j a_k}\right)_{j,k=0}^{N-1},
\label{eq1.5.1}
\end{equation}
and let $D_\Gamma$ be the $N\times N$ diagonal matrix with entries $e^{2\pi i\lambda_j}$, $j=0,\dots,N-1$.
\begin{equation}
B=\F^* D_\Gamma\F
\label{eq1.5.2}
\end{equation}
The group of local translations $(U_\Lambda(t))_{t\in\br}$ associated to the spectrum $\Lambda$ of $\Omega$ is given by
\begin{equation}
\begin{pmatrix}
(U_\Lambda(t)f)(x+a_0)\\
\vdots\\
(U_\Lambda(t)f)(x+a_{N-1})
\end{pmatrix}=B^{\lfloor x+t\rfloor}\begin{pmatrix}
f(\{ x+t\}+a_0)\\ \vdots\\
f(\{x+t\}+a_{N-1})
\end{pmatrix},\quad (f\in L^2(\Omega),x\in[0,1],t\in\br).
\label{eq1.5.3}
\end{equation}
where $\lfloor\cdot\rfloor$ and $\{\cdot\}$ represent the integer and the fractional parts respectively.
\end{proposition}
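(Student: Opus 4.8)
The plan is to verify directly that the operator $V(t)$ defined on $L^2(\Omega)$ by the right-hand side of \eqref{eq1.5.3} coincides with the group of local translations $U_\Lambda(t)$ associated to the spectrum $\Lambda = \Gamma + \bz$ of $\Omega$. Since by Theorem \ref{th4.1} the group $U_\Lambda$ is \emph{the} unitary group of local translations for $\Omega$, and by Theorem \ref{th3.1} (applied to the finite union of intervals $\Omega$) such a group is unique, it suffices to show that the family $(V(t))_{t\in\br}$ is a strongly continuous one-parameter unitary group on $L^2(\Omega)$ satisfying the local translation property \eqref{eq3.3.1}, \emph{and} that its infinitesimal behaviour picks out exactly the spectrum $\Lambda$; alternatively, and more cleanly, one checks directly that $V(t) e_\lambda = e_\lambda(t) e_\lambda$ for every $\lambda \in \Lambda$, which by Proposition \ref{pr3.4} characterizes $U_\Lambda(t)$. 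I would take this second route.

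First I would set up notation: identify $L^2(\Omega)$ with $L^2([0,1], \bc^N)$ via $f \mapsto (f(\cdot + a_0), \dots, f(\cdot + a_{N-1}))$, write $\lambda = \lambda_j + n$ with $n \in \bz$ for a generic element of $\Lambda$, and record that the eigenvector structure from Proposition \ref{pr4.4} gives $B e_{\lambda_j}^{(A)} = e^{2\pi i \lambda_j} e_{\lambda_j}^{(A)}$, where $e_{\lambda_j}^{(A)} = (e^{2\pi i \lambda_j a_0}, \dots, e^{2\pi i \lambda_j a_{N-1}})^{T} \in l^2(A)$ is the relevant eigenvector of $B$. Under the identification above, the function $e_\lambda$ restricted to $\Omega$ corresponds to the vector-valued function $x \mapsto e^{2\pi i (\lambda_j + n) x} \, e_{\lambda_j}^{(A)}$ (using $e^{2\pi i (\lambda_j+n)(x+a_k)}$, and noting $e^{2\pi i n a_k}=1$ since $a_k, n \in \bz$ — this is the key arithmetic simplification).

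Next I would apply the formula \eqref{eq1.5.3} to $f = e_\lambda$: the right-hand side becomes
\[
B^{\lfloor x+t\rfloor}\, e^{2\pi i (\lambda_j+n)\{x+t\}}\, e_{\lambda_j}^{(A)} = e^{2\pi i (\lambda_j+n)\{x+t\}}\, e^{2\pi i \lambda_j \lfloor x+t\rfloor}\, e_{\lambda_j}^{(A)},
\]
using that $e_{\lambda_j}^{(A)}$ is an eigenvector of $B$ with eigenvalue $e^{2\pi i \lambda_j}$. Since $e^{2\pi i \lambda_j \lfloor x+t\rfloor} = e^{2\pi i (\lambda_j + n)\lfloor x+t\rfloor}$ (again because $n\lfloor x+t\rfloor \in \bz$), the exponents combine to $e^{2\pi i (\lambda_j+n)(\{x+t\} + \lfloor x+t\rfloor)} = e^{2\pi i (\lambda_j+n)(x+t)}$, which equals $e^{2\pi i (\lambda_j+n)t}$ times $e^{2\pi i (\lambda_j+n)x}\, e_{\lambda_j}^{(A)}$, i.e. $e_\lambda(t)$ times the vector representing $e_\lambda$. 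Hence $V(t) e_\lambda = e_\lambda(t) e_\lambda$ for all $\lambda \in \Lambda$. Since $\{e_\lambda : \lambda \in \Lambda\}$ is an orthogonal basis of $L^2(\Omega)$ and $U_\Lambda(t)$ is the unique bounded operator with this eigenvalue action (Proposition \ref{pr3.4}), we conclude $V(t) = U_\Lambda(t)$. I would close by remarking that strong continuity and the group property of $V(t)$ can either be inherited from this identification or checked directly from the $B^{\lfloor x+t\rfloor}$ formula using the cocycle identity $\lfloor x + s + t\rfloor = \lfloor \{x+s\} + t\rfloor + \lfloor x+s\rfloor$ together with $B^{\lfloor x+s\rfloor}$ absorbing into the power.

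The main obstacle is purely bookkeeping: keeping the integer-shift arithmetic straight (the repeated use of $e^{2\pi i n a_k} = 1$ and $e^{2\pi i n \lfloor x+t\rfloor} = 1$ for $n \in \bz$) and confirming that the block-vector reindexing in \eqref{eq1.5.3} genuinely matches the action of $B$ on $l^2(A)$ rather than some transpose or conjugate; once the correspondence $e_\lambda \leftrightarrow e^{2\pi i (\lambda_j + n)x}\, e_{\lambda_j}^{(A)}$ is pinned down correctly, the verification is a one-line computation. One should also briefly note that $\Lambda = \Gamma + \bz$ being a spectrum for $\Omega$ is already established in the text preceding the proposition (orthogonality of $\{e_\lambda\}$ follows from unitarity of the Hadamard matrix for $(A,\Gamma)$ together with the standard fact that $\{e_n : n \in \bz\}$ tiles, and completeness is a dimension/measure count), so it may be invoked rather than reproved.
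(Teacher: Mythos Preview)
Your proposal is correct and follows essentially the same route as the paper: both define the operator by the right-hand side of \eqref{eq1.5.3} and verify it coincides with $U_\Lambda(t)$ by checking the eigenvalue equation $V(t)e_\lambda=e_\lambda(t)e_\lambda$ on the basis $\{e_\lambda:\lambda\in\Lambda\}$, using that the vector $(e^{2\pi i\lambda_j a_k})_{k}$ is an eigenvector of $B$ with eigenvalue $e^{2\pi i\lambda_j}$ together with the integer-part arithmetic $e^{2\pi i n\lfloor x+t\rfloor}=1$. Your additional remarks on the identification $L^2(\Omega)\cong L^2([0,1],\bc^N)$ and on the cocycle/group-law check are helpful framing but not needed once the eigenvalue identity is established.
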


\begin{proof}
The fact that $\Lambda$ is a spectrum for $\Omega$ can be found, for example, in \cite{KoMa06}. The formula for $U_\Lambda$ appears in a slightly different form in \cite{DJ12u}, but we can check it here directly in a different way: it is enough to prove that 
\begin{equation}
U_\Lambda(t)e_\lambda=e_\lambda(t)e_\lambda\mbox{ for all }\lambda\in\Lambda
\label{eq1.5.4}
\end{equation}
Thus, we have to plug in $f=e_{\lambda_i+n}$ in the right hand side of \eqref{eq1.5.3}, with $\lambda_i\in\Gamma$, $n\in\bz$. For the computation, we will use the following relation:

\begin{equation}
\F\frac{1}{\sqrt N}\begin{pmatrix}
e^{2\pi i \lambda_i a_0}\\
\vdots\\
e^{2\pi i \lambda_i a_{N-1}}
\end{pmatrix}=\delta_i.
\label{eq1.5.5}
\end{equation}
Indeed, we have, for $j=0,\dots,N-1$,
$$\frac{1}{N}\sum_{k=0}^{N-1}e^{-2\pi i\lambda_j a_k}e^{2\pi i\lambda_i a_k}=\frac{1}{N}\sum_{k=0}^{N-1}e^{2\pi i(\lambda_i-\lambda_j)a_k}=\delta_{ij},$$
because $\Gamma$ is a spectrum for $A$.

Then, for $m\in\bz$,
\begin{equation}
B^m\begin{pmatrix}
e^{2\pi i \lambda_i a_0}\\
\vdots\\
e^{2\pi i \lambda_i a_{N-1}}
\end{pmatrix}=e^{2\pi i\lambda_i m}\begin{pmatrix}
e^{2\pi i \lambda_i a_0}\\
\vdots\\
e^{2\pi i \lambda_i a_{N-1}}
\end{pmatrix}
\label{eq1.5.6}
\end{equation}
We have, for $x\in[0,1]$ and $t\in\br$:
$$B^{\lfloor x+t\rfloor}\begin{pmatrix}
e^{2\pi i(\{x+t\}+a_0)(\lambda_i+n)}\\
\vdots\\
e^{2\pi i(\{x+t\}+a_{N-1})(\lambda_i+n)}
\end{pmatrix}
=e^{2\pi i \{x+t\}(\lambda_i+n)}B^{\lfloor x+t\rfloor}\begin{pmatrix}
e^{2\pi ia_0\lambda_i}\\
\vdots\\
e^{2\pi ia_{N-1}\lambda_i}
\end{pmatrix}$$$$
=e^{2\pi i \{x+t\}(\lambda_i+n)}e^{2\pi i\lfloor x+t\rfloor\lambda_i}\begin{pmatrix}
e^{2\pi ia_0\lambda_i}\\
\vdots\\
e^{2\pi ia_{N-1}\lambda_i}
\end{pmatrix}$$$$=e^{2\pi i(\lambda_i+n)(x+t)}\begin{pmatrix}
e^{2\pi ia_0\lambda_i}\\
\vdots\\
e^{2\pi ia_{N-1}\lambda_i}
\end{pmatrix}=e^{2\pi i(\lambda_i+n) t}\begin{pmatrix}
e^{2\pi i(x+a_0)(\lambda_i+n)}\\
\vdots\\
e^{2\pi i(x+a_{N-1})(\lambda_i+n)}
\end{pmatrix}.$$
This proves \eqref{eq1.5.4}.
\end{proof}

In the following we present some connections between the local matrix $B$ and possible tilings for the set $A$. We define a set $\Theta_B$ as the set of powers of the matrix $B$ which have a canonical vector as a column, with 1 not on the diagonal. 
\begin{definition}\label{def3.8}
Let $A$ be a spectral subset of $\bz$ with spectrum $\Gamma$. Let $B$ be the associated local translation matrix. Define
\begin{equation}
\Theta_B:=\{ m\in\bz : B^m\mbox{ has a column $a$ equal to the canonical vector $\delta_{a'}$ for some $a\neq a'$}\}
\label{eq3.8.1}
\end{equation}
$$=\{m\in\bz : B^m\delta_a=\delta_{a'}\mbox{ for some $a\neq a'$ in $A$}\}.$$
\end{definition}

\begin{proposition}\label{pr3.9}
Let $A$ be a spectral subset of $\bz$ with spectrum $\Gamma$, $|A|=N$. Assume $0\in\Gamma$. Assume in addition that the smallest lattice that contains $\Gamma$ is $\frac rd\bz$ for some mutually prime integers $r,d\geq 1$. For a subset $\mathcal T$ of $\bz$ the following statements are equivalent:
\begin{enumerate}
	\item $\mathcal T\oplus A=\bz_d$, in the sense that $\T\oplus A$ is a complete set of representatives modulo $d$ and every element $x$ in $\T+A$ can be represented in a unique way as $x=t+a$ with $t\in\T$ and $a\in A$. In this case $A$ tiles $\bz$ by $\T\oplus d\bz$. 
	\item $(\T-\T)\cap \Theta_B=\{0\}$ and $|\T||A|=d$. 
\end{enumerate}

\end{proposition}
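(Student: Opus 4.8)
The plan is to push the whole statement down to the finite group $\bz_d$; the decisive observation is that $\Theta_B$ depends only on the reduction $\bar A:=A\bmod d$. I would first record two preliminary facts. (1) \emph{Distinct elements of $A$ are incongruent modulo $d$}: if $a\equiv a'\pmod d$ with $a\neq a'$ in $A$, then for every $\lambda\in\Gamma\subseteq\frac rd\bz$ we have $(a-a')\lambda\in\bz$, so $e^{2\pi i(a-a')\lambda}=1$; summing over $\lambda\in\Gamma$ gives $\sum_{\lambda\in\Gamma}e^{2\pi i(a-a')\lambda}=N\neq0$, which contradicts the orthogonality relations expressing that the Hadamard matrix \eqref{eq0.2.1} is unitary. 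Hence $|\bar A|=N$. (2) \emph{Description of $\Theta_B$}: since $B=\F^*D_\Gamma(1)\F$ (Definition \ref{def2.4}) is a one-parameter group and $\F\delta_a=\frac1{\sqrt N}(e^{-2\pi i\lambda a})_{\lambda\in\Gamma}$, one has $B^m\delta_a=\F^*D_\Gamma(m)\F\delta_a$, whence $B^m\delta_a=\delta_{a'}$ iff $e^{2\pi i\lambda(m-a+a')}=1$ for all $\lambda\in\Gamma$. Because $0\in\Gamma$ and $\Gamma$ generates the lattice $\frac rd\bz$ with $\gcd(r,d)=1$, this is equivalent to $d\mid(m-a+a')$. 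Together with (1) this yields
$$\Theta_B=\{m\in\bz:\ m\equiv a-a'\ (\mathrm{mod}\ d)\text{ for some }a\neq a'\text{ in }A\}=\{m\in\bz:\ (m\bmod d)\in(\bar A-\bar A)\setminus\{0\}\},$$
and in particular $B^d=I$.

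With (1) and (2) in place, the proposition reduces to the standard fact that in a finite abelian group a packing of full size is a tiling. Writing $\bar{\T}=\T\bmod d$ — and taking $\T$ inside a transversal of $d\bz$, which is what the direct sum $\T\oplus d\bz$ in (i) presupposes, so $|\bar{\T}|=|\T|$ — fact (2) shows that $(\T-\T)\cap\Theta_B=\{0\}$ is exactly the statement that $(\bar{\T}-\bar{\T})\cap(\bar A-\bar A)=\{0\}$ in $\bz_d$, i.e.\ that the addition map $\bar{\T}\times\bar A\to\bz_d$ is injective. For (i) $\Rightarrow$ (ii): a bijection $\T\times A\to\bz_d$ is in particular injective, and counting forces $|\T|\,|A|=d$. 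For (ii) $\Rightarrow$ (i): an injective map between finite sets of equal cardinality $|\T|\,|A|=d=|\bz_d|$ is a bijection, which is precisely $\T\oplus A=\bz_d$. The remaining assertion of (i), that $A$ tiles $\bz$ by $\T\oplus d\bz$, then follows at once: every $x\in\bz$ lies in a unique residue class mod $d$, hence equals $t+a+dk$ for a unique $(t,a)\in\T\times A$ and then a unique $k\in\bz$, using (1) and that $\T$ injects mod $d$.

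The preliminary computations and the final counting are routine; the step I expect to be the real content is fact (2), where one must combine the Fourier-transform formula for the powers of $B$ with the lattice structure of $\Gamma$ and the normalization $0\in\Gamma$ — without $0\in\Gamma$, unimodular phases perturb $\Theta_B$. A second point to watch, in the direction (ii) $\Rightarrow$ (i), is that $(\T-\T)\cap\Theta_B=\{0\}$ only constrains pairs of elements of $\T$ lying in distinct residue classes mod $d$; this is why one should fix $\T$ inside a transversal of $d\bz$ from the outset (equivalently, read $\T\oplus d\bz$ in (i) as a genuine direct sum).
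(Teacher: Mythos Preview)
Your proposal is correct and follows essentially the same route as the paper: both first identify $\Theta_B$ explicitly as $\{m\in\bz:\ m\equiv a'-a\pmod d\text{ for some }a\neq a'\in A\}$ via the Fourier diagonalization of $B$ together with the triangle-inequality/lattice argument using $0\in\Gamma$ and $\Gamma\subset\frac rd\bz$, and then reduce the equivalence to the elementary packing-equals-tiling counting in $\bz_d$. Your preliminary fact (1) (that $A$ injects into $\bz_d$) and your remark that $\T$ should be read inside a transversal of $d\bz$ are not stated in the paper but are welcome clarifications; they plug a small looseness in the paper's (ii)$\Rightarrow$(i), where injectivity of $(a,t)\mapsto a+t\bmod d$ tacitly uses that $A$ and $\T$ are already distinct mod $d$.
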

\begin{proof}
First, we present $\Theta_B$ in a more explicit form. By Proposition \ref{pr4.4} we have 
$$(B^m)_{aa'}=\frac{1}{N}\sum_{\lambda\in\Gamma}e^{2\pi i (a-a'+m)\lambda}.$$
Since we want $(B^m)_{aa'}$ to be 1 for some $a\neq a'$, we must have equality in the triangle inequality 
$$|(B^m)_{aa'}|\leq \frac{1}{N}\sum_{\lambda\in\Gamma}1=1,$$
and since $0\in\Gamma$ this implies that $e^{2\pi i (a-a'+m)\lambda}=1$ so $(a-a'+m)\lambda\in\bz$ for all $\lambda\in\Gamma$. Since the smallest lattice that contains $\Gamma$ is $\frac rd\bz$, we obtain that $(a-a'+m)\frac rd\in\bz$ which means that $m\equiv a'-a \mod d$. The converse also holds: if $a'\equiv a+m\mod d$ then $B^m$ has a 1 on position $aa'$. Thus 
\begin{equation}
\Theta_B=\{m\in\bz : m\equiv a'-a\mod d\mbox{ for some }a\neq a'\in A\}
\label{eq3.9.1}
\end{equation}

(i)$\Rightarrow$(ii). Suppose there exists $t\neq t'$ in $\T$ such that $t-t'\in \Theta_B$. Then there exist $a\neq a'$ in $A$ such that $a'-a\equiv t-t'\mod d$. Then $a'+t'\equiv a+t\mod d$, a contradiction. Also if $A\oplus \T=\bz_d$, then $|A||\T|=d$. 

(ii)$\Rightarrow$(i). It is enough to prove that $(A-A)\cap(\T-\T)=\{0\}\mod d$, because this implies that the map from $A\times T$ to $(A+T)\mod d$, $(a,t)\mapsto a+t\mod d$ is injective, and the condition $|A||\T|=d$ implies that it has to be bijective. 

Suppose not. Then there exist $a\neq a'$ in $A$ $t\neq t'$ in $\T$ such that $a+t\equiv a'+t'\mod d$. Then $t-t'\equiv a'-a\mod d$. Therefore $t-t'\in \Theta_B$ which contradicts the hypothesis. 
\end{proof}

\begin{corollary}\label{cor3.10}
If the local translation matrix $B$ is 
$$B=\begin{pmatrix}
0&1&\dots&0\\
0&0&\ddots&0\\
\vdots&\vdots&\vdots&\vdots\\
0&0&\dots&1\\
1&0&\dots&0
\end{pmatrix},$$
then $A$ tiles $\bz$ by $N\bz$.
\end{corollary}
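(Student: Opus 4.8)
The plan is to read off how $B$ permutes the canonical basis of $l^2(A)$ directly from its shape, and then feed this into the defining identity \eqref{eq2.4.2} of a local translation matrix to force $A$ to be a complete residue system modulo $N$.

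First I would fix an enumeration $A=\{a_0,\dots,a_{N-1}\}$ and identify $\delta_{a_j}$ with the $j$-th standard basis vector of $\mathbb{C}^N$. With this identification a one-line computation of the entries shows that the given $B$ is the cyclic shift, $B\delta_{a_j}=\delta_{a_{(j-1)\bmod N}}$, and hence $B^m\delta_{a_j}=\delta_{a_{(j-m)\bmod N}}$ for every $m\in\bz$. (In particular $B^N=I$, and $B^m\delta_a=\delta_{a'}$ holds for some $a\neq a'$ exactly when $m\not\equiv 0\pmod N$; i.e.\ $\Theta_B=\bz\setminus N\bz$.)

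Next, since $B$ is by hypothesis a local translation matrix, \eqref{eq2.4.2} gives $B^{a_j-a_k}\delta_{a_j}=\delta_{a_k}$ for all $j,k$. Comparing with the formula for $B^m$ from the previous step and using that $a_0,\dots,a_{N-1}$ are distinct yields $(j-(a_j-a_k))\equiv k\pmod N$, that is,
$$a_j-a_k\equiv j-k\pmod N\qquad(0\le j,k\le N-1).$$
Taking $k=0$ gives $a_j\equiv a_0+j\pmod N$, so as $j$ runs through $0,\dots,N-1$ the residues $a_j\bmod N$ run through all of $\bz_N$; since $|A|=N$, the set $A$ meets each residue class modulo $N$ exactly once. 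A set meeting each class modulo $N$ exactly once tiles $\bz$ by $N\bz$ (the translates $A+Nk$, $k\in\bz$, partition $\bz$), which is the assertion.

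I do not expect a serious obstacle here; the only point requiring care is the bookkeeping that identifies elements of $A$ with matrix coordinates and fixes the direction in which $B$ shifts — but the conclusion (``complete residue system modulo $N$'') is insensitive to whether $B$ shifts by $+1$ or $-1$, so even this is harmless. Alternatively, one can route the argument through Proposition \ref{pr3.9}: having computed $\Theta_B=\bz\setminus N\bz$, one observes that the eigenvalues of $B$ are exactly the $N$-th roots of unity, so every $\lambda\in\Gamma$ lies in $\tfrac1N\bz$ while some $\lambda\equiv\tfrac1N\pmod\bz$, which forces the smallest lattice containing $\Gamma$ to be $\tfrac rN\bz$ with $\gcd(r,N)=1$; then Proposition \ref{pr3.9} applied with $\mathcal T=\{0\}$ gives the tiling, since $(\mathcal T-\mathcal T)\cap\Theta_B=\{0\}$ and $|\mathcal T||A|=N$.
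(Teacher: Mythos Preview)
Your proposal is correct. The paper's own proof is precisely your alternative route, stated in a single sentence: it records $\Theta_B=\{1,\dots,N-1\}+N\bz$ and applies Proposition~\ref{pr3.9} with $\mathcal T=\{0\}$. Your primary argument takes a genuinely different and more self-contained path: instead of passing through $\Theta_B$ and the lattice generated by $\Gamma$, you read off from the shape of $B$ and the defining relation \eqref{eq2.4.2} that $a_j-a_k\equiv j-k\pmod N$, so $A$ is a complete residue system modulo $N$, and conclude directly. This buys you independence from the hypotheses of Proposition~\ref{pr3.9} (you never need to locate the smallest lattice containing $\Gamma$ or assume $0\in\Gamma$), at the cost of not illustrating the machinery the paper has just built. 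Your alternative via Proposition~\ref{pr3.9} is in fact more careful than the paper's own version: you verify $d=N$ by noting that the eigenvalues of $B$ are exactly the $N$-th roots of unity, whereas the paper leaves this implicit.
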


\begin{proof}
We have $\Theta_B=\{1,\dots,N-1\}+N\bz$ so one can take $\mathcal T=\{0\}$ in Proposition \ref{pr3.9}.
\end{proof}

Another piece of information that is contained in the local translation matrix is the rationality of the spectrum:

\begin{proposition}\label{pr3.11}
Let $A$ be a spectral set in $\bz$ with spectrum $\Gamma$ and local translation matrix $B$. Let $d\in\bz$, $d\geq1$. Then $\Gamma\subset\frac1d\bz$ if and only if $B^d=I$. The spectrum $\Gamma$ is rational if and only if the group of local translations $U_\Gamma$ has an integer period, i.e., there exists $p\in\bz$, $p\geq 1$ such that $U_\Gamma(t+p)=U_\Gamma(t)$, $t\in\br$. 
\end{proposition}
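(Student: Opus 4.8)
The plan is to prove the two biconditionals in turn, using the spectral decomposition of $B$ provided by Proposition \ref{pr4.4}. Recall that $B$ has eigenvectors $e_\lambda$, $\lambda\in\Gamma$, with eigenvalues $e^{2\pi i\lambda}$, each of multiplicity one; since $|A|=N=|\Gamma|$, the family $\{e_\lambda:\lambda\in\Gamma\}$ is an orthogonal basis of $l^2(A)$. Therefore $B^d$ acts on $e_\lambda$ by $e^{2\pi i d\lambda}$, and $B^d=I$ is equivalent to $e^{2\pi i d\lambda}=1$ for every $\lambda\in\Gamma$, which is exactly $d\lambda\in\bz$ for all $\lambda\in\Gamma$, i.e.\ $\Gamma\subset\frac1d\bz$. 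This settles the first equivalence essentially by inspection; the only point to be careful about is that the eigenvectors span, so that $B^d$ is determined by its action on them.

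For the second statement, I would argue both implications through the first equivalence. If $\Gamma$ is rational, choose $d\geq 1$ with $\Gamma\subset\frac1d\bz$; then $B^d=I$. I claim $U_\Gamma$ has period $d$. Indeed, from \eqref{eq2.4.4}, $U_\Gamma(t)=\F^*D_\Gamma(t)\F$, and $D_\Gamma(t+d)$ has entries $e^{2\pi i\lambda(t+d)}=e^{2\pi i\lambda t}e^{2\pi i\lambda d}=e^{2\pi i\lambda t}$ since $\lambda d\in\bz$; hence $D_\Gamma(t+d)=D_\Gamma(t)$ and $U_\Gamma(t+d)=U_\Gamma(t)$ for all $t$. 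So $p=d$ works. Conversely, suppose $U_\Gamma(t+p)=U_\Gamma(t)$ for all $t\in\br$ for some integer $p\geq 1$. Applying this to the eigenvector relation \eqref{eq4.4.1}, $U_\Gamma(t)e_\lambda=e^{2\pi i\lambda t}e_\lambda$, we get $e^{2\pi i\lambda(t+p)}e_\lambda=e^{2\pi i\lambda t}e_\lambda$ for all $t$, and since $e_\lambda\neq 0$ this forces $e^{2\pi i\lambda p}=1$, i.e.\ $\lambda p\in\bz$, for every $\lambda\in\Gamma$. Hence $\Gamma\subset\frac1p\bz\subset\bz$, so $\Gamma$ is rational.

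I do not anticipate a genuine obstacle here; the argument is a routine translation between the spectral data of $B$ and of the one-parameter group $U_\Gamma$. The only minor care needed is to phrase "rational spectrum" correctly — $\Gamma$ rational means $\Gamma\subset\Q$, which for a finite set is the same as $\Gamma\subset\frac1d\bz$ for some common denominator $d$, so the passage from "there exists $d$ with $B^d=I$" to "$\Gamma$ rational" and back is immediate. One should also note that the statement and proof are insensitive to the choice of spectrum $\Gamma$ among those with $\{e^{2\pi i\lambda}:\lambda\in\Gamma\}$ equal to the spectrum of $B$, since any two such choices are congruent modulo $\bz$ and hence have the same rationality and give the same $U_\Gamma(t)$ for $t\in\bz$; but as $\Gamma$ is fixed in the hypothesis, this remark is not strictly needed for the proof.
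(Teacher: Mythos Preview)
Your proof is correct and follows essentially the same approach as the paper: both arguments reduce $B^d=I$ to the diagonal condition $e^{2\pi i d\lambda}=1$ for all $\lambda\in\Gamma$, the paper via the conjugation formula \eqref{eq2.4.4} and you via the eigenvector relation of Proposition~\ref{pr4.4}, which are two ways of saying the same thing. One small slip: in the last line of your converse argument you write $\Gamma\subset\frac1p\bz\subset\bz$, but you mean $\frac1p\bz\subset\Q$.
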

\begin{proof}
If $\Gamma\subset \frac1d\bz$, using equation \eqref{eq2.4.4}, with $t=d$, we have that $D_\Gamma(d)=I$ so $B^d=I$. Conversely, if $B^d=I$ then $D_\Gamma(d)=I$ and therefore $d\lambda\in\bz$ for all $\lambda\in\Gamma$. The second statement follows from the first. 
\end{proof}

\section{Examples}
In this section we study the local translation groups associated to spectral sets of low cardinality $N=2,3,4,5$. Such sets were described in \cite{DH12}. We recall here the results:
\begin{definition}\label{def5.1}
The {\it standard $N\times N$ Hadamard matrix} is 
\begin{equation}
\frac{1}{\sqrt{N}}\left(e^{2\pi i\frac{jk}N}\right)_{j,k=0}^{N-1}.
\label{eq5.1}
\end{equation}
We say that a $N\times N$ matrix is {\it equivalent to the standard Hadamard matrix} if it can be obtained from it by permutations of rows and columns. 

Let $A$ and $L$ be two subsets of $\bz$ and $R\in\bz$, $R\geq1$. We say that $(A,L)$ is a {\it Hadamard pair} with scaling factor $R$ if $\frac1RL$ is a spectrum for $A$.
\end{definition}

\begin{theorem}\label{th5.1}
Let $A \subset \bz$ have $N$ elements and spectrum $\Gamma$. Assume $0$ is in $A$ and $\Gamma$. Suppose the Hadamard matrix associated to $(A,\Gamma)$ is equivalent to the standard $N$ by $N$ Hadamard matrix. Then $A$ has the form $A=d A_0$ where $d$ is an integer and $A_0$ is a complete set of residues modulo $N$ with $\gcd(A_0)=1$. In this case any such spectrum $\Gamma$ has the form $\Gamma = \frac{1}{R} f L_0$ where $f$ and $R$ are integers, $L_0$ is a complete set of residues modulo $N$ with greatest common divisor one, and $R=NS$ where $S$ divides $df$ and $\frac{df}{S}$ is mutually prime with $N$. The converse also holds.

\end{theorem}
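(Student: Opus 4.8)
The plan is to first normalize using $0\in A\cap\Gamma$, then read the structure of $A$ off a single row of the Hadamard matrix, and finally obtain the structure of $\Gamma$ from a transpose‑duality argument together with the unitarity constraint; the converse is then a direct computation.

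\textbf{Normalization.} The matrix $\frac1{\sqrt N}(e^{2\pi i\gamma a})_{\gamma\in\Gamma,a\in A}$ has an all‑ones row (at $\gamma=0$) and an all‑ones column (at $a=0$); being unitary, its rows are pairwise distinct and its columns are pairwise distinct, so these are its only constant row and column. The standard matrix $\frac1{\sqrt N}(e^{2\pi i jk/N})$ likewise has $j=0$ as its only constant row and $k=0$ as its only constant column. Hence any row/column permutation realizing the equivalence matches $\gamma=0$ with $j=0$ and $a=0$ with $k=0$, and after relabelling $A=\{a_0=0,a_1,\dots,a_{N-1}\}$, $\Gamma=\{\gamma_0=0,\gamma_1,\dots,\gamma_{N-1}\}$ we may assume $e^{2\pi i\gamma_j a_k}=e^{2\pi i jk/N}$ for all $0\le j,k\le N-1$. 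Since the two matrices have the same entries, every $e^{2\pi i\gamma_j a_k}$ is an $N$‑th root of unity, i.e. $N\gamma_j a_k\in\bz$ (the ``Butson condition''); also $e^{2\pi i\gamma a}$ depends on $\gamma$ only modulo $1$, so we regard $\Gamma$ in $\br/\bz$.

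\textbf{Structure of $A$.} I would use only the row $j=1$: $\gamma_1 a_k\equiv k/N\pmod 1$ for every $k$. Taking $k=1$ with $a_1\ne 0$ shows $\gamma_1\in\Q$. Put $d:=\gcd(A)$ and $A_0:=\frac1d A=\{b_0=0,b_1,\dots,b_{N-1}\}$, so $\gcd(A_0)=1$; write $\gamma_1 d=p'/q'$ in lowest terms. The Butson condition $N\gamma_1 a_k\in\bz$ gives $q'\mid Nb_k$ for all $k$, hence $q'\mid N\gcd(A_0)=N$. Writing $N=q'm$, the congruence $\gamma_1 a_k\equiv k/N$ becomes $mp'b_k\equiv k\pmod N$ for all $k$; since the left side is a multiple of $m\mid N$ while $k$ runs over all residues mod $N$ (in particular $k=1$), we are forced to $m=1$, i.e. $q'=N$ and $\gcd(p',N)=1$. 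Then $p'b_k\equiv k\pmod N$ with $p'$ invertible mod $N$, so $\{b_k\bmod N\}=\{0,\dots,N-1\}$: $A_0$ is a complete residue system mod $N$ with $\gcd 1$, and $\gamma_1=p'/(Nd)$ with $\gcd(p',N)=1$. From $\gamma_j a_1\equiv j/N$ one also sees each $\gamma_j$ is rational, with denominator dividing $Ndb_1$.

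\textbf{Structure of $\Gamma$.} Since the standard Hadamard matrix is symmetric, transposing shows that for any integer $R\ge1$ with $R\Gamma\subseteq\bz$ (which exists by the previous paragraph), the set $L:=R\Gamma\subseteq\bz$ is a spectral subset of $\bz$ containing $0$, with spectrum $\frac1R A$ and Hadamard matrix equivalent to the standard one; applying the previous paragraph to $L$ gives $L=fL_0$ with $f:=\gcd(L)$ and $L_0$ a complete residue system mod $N$, $\gcd(L_0)=1$. Thus $\Gamma=\frac1R fL_0$. Choosing $R$ minimal, the (lowest‑terms) denominator $Nd/\gcd(p',d)$ of $\gamma_1$ divides $R$, and since $N$ divides that denominator we get $N\mid R$; write $R=NS$. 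Finally, $\gcd(\{\ell_0 b:\ell_0\in L_0,\,b\in A_0\})=1$, so requiring $N\cdot\frac{df}{R}\ell_0 b=\frac{df}{S}\ell_0 b\in\bz$ for all $\ell_0,b$ forces $S\mid df$; and writing the matrix as $\frac1{\sqrt N}\big(e^{2\pi i(df/S)\ell_0 b/N}\big)_{\ell_0\in L_0,\,b\in A_0}$ and pairing one row against another over the complete residue system $A_0$, one sees unitarity is equivalent to $\gcd(df/S,N)=1$. This yields all the stated conditions. For the converse, given $A=dA_0$ and $\Gamma=\frac1{NS}fL_0$ with $S\mid df$, $\gcd(df/S,N)=1$, set $c:=df/S$; then $c$ is a unit mod $N$ and both $A_0$ and $cL_0$ are complete residue systems mod $N$, so the entry $e^{2\pi i\gamma a}=e^{2\pi i c\ell_0 b/N}$ becomes, after reindexing $\ell_0\mapsto c\ell_0\bmod N$ and $b\mapsto b\bmod N$, exactly $e^{2\pi i j'k'/N}$; hence the Hadamard matrix is a row/column permutation of the standard one, in particular unitary, so $\Gamma$ is indeed a spectrum for $A$.

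I expect the main obstacle to be the third paragraph: the shortcut of inspecting only the rows $j=1$ and $k=1$ no longer suffices, so one must use the full unitarity of the matrix to pin down the divisibility $S\mid df$ and the coprimality $\gcd(df/S,N)=1$ exactly, and to justify that the minimal $R$ is divisible by $N$. A minor point is that the relabelling in the normalization step must be simultaneously compatible with $a_0=0$ and $\gamma_0=0$, which is why the uniqueness of the constant row and column is needed first.
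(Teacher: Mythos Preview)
The paper does not actually prove this theorem: it is quoted from \cite{DH12} (note the sentence introducing the Examples section, ``Such sets were described in \cite{DH12}. We recall here the results''), so there is no in-paper argument to compare against.

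Your argument, read on its own, is correct. The normalization step is justified exactly as you say: since $0\in A$ and $0\in\Gamma$, the associated Hadamard matrix has an all-ones row and column, and by orthogonality these are unique, so any permutation realizing the equivalence must fix them; this lets you index so that $e^{2\pi i\gamma_j a_k}=e^{2\pi ijk/N}$. The extraction of $A=dA_0$ from the row $j=1$ is clean; the only place the wording is slightly loose is ``the left side is a multiple of $m\mid N$'': what you use is that if $m\mid N$ then the image of $m\bz$ in $\bz/N\bz$ is a proper subgroup when $m>1$, hence cannot contain $1$, forcing $m=1$. The duality step is legitimate because the standard matrix is symmetric, so the transpose of your Hadamard matrix is again permutation-equivalent to the standard one; applying the first step to $L=R\Gamma$ then yields $L=fL_0$. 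For $N\mid R$ you correctly observe that the lowest-terms denominator of $\gamma_1=p'/(Nd)$ is $Nd/\gcd(p',d)$, which is a multiple of $N$ since $\gcd(p',N)=1$, and the minimal $R$ (the lcm of denominators) is a multiple of every denominator. The claim $\gcd\{\ell_0 b:\ell_0\in L_0,\ b\in A_0\}=1$ follows because any prime dividing all such products would, by choosing $\ell_0$ not divisible by it, have to divide every nonzero $b\in A_0$, contradicting $\gcd(A_0)=1$; then B\'ezout gives $S\mid df$, and the standard geometric-sum computation over the complete residue system $A_0$ gives $\gcd(df/S,N)=1$ from unitarity. Your converse is also correct: with $c:=df/S$ a unit mod $N$, the entry $e^{2\pi i c\ell_0 b/N}$ depends only on $(\ell_0\bmod N,\,b\bmod N)$, and the reindexing $\ell_0\mapsto c\ell_0\bmod N$, $b\mapsto b\bmod N$ exhibits the matrix as a row/column permutation of the standard one. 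The self-identified ``obstacles'' in your last paragraph are all handled by the argument you wrote.
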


Since for $N=2,3,5$ our Hadamard matrices are equivalent to the standard one (see \cite{Haa97,TaZy06}) the next corollary follows:
\begin{corollary}\label{pr5.1}A set $A \subset \bz$ with $|A|=N=2$, $3$, or $5$, where $0 \in A$ is spectral if and only if $A= N^k A_0$ where $k$ is a positive integer and $A_0$ is a complete set of residues modulo $N$.
\end{corollary}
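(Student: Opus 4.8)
The plan is to obtain Corollary~\ref{pr5.1} as a direct consequence of Theorem~\ref{th5.1}, using the one external fact that distinguishes the cases $N=2,3,5$: every $N\times N$ complex Hadamard matrix with $N\in\{2,3,5\}$ is equivalent, in the sense of Definition~\ref{def5.1}, to the standard one. This is elementary for $N=2$, classical for $N=3$, and is Haagerup's classification theorem for $N=5$ \cite{Haa97} (see also \cite{TaZy06}). Granting it, for any spectral set $A$ with $|A|=N$ the Hadamard matrix associated with a spectrum of $A$ is automatically equivalent to the standard one, so the hypothesis of Theorem~\ref{th5.1} holds for free.

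For the ``if'' direction I would argue directly. Suppose $A=N^{k}A_{0}$ with $A_{0}=\{a_{0},\dots,a_{N-1}\}$ a complete set of residues modulo $N$ containing $0$. Let $\pi$ be the permutation of $\{0,\dots,N-1\}$ defined by $a_{k}\equiv\pi(k)\ (\operatorname{mod} N)$ and put $\Gamma_{0}=\{0,\tfrac1N,\dots,\tfrac{N-1}N\}$. The Hadamard matrix of the pair $(A_{0},\Gamma_{0})$ has $(j,k)$-entry $\tfrac1{\sqrt N}e^{2\pi i j a_{k}/N}=\tfrac1{\sqrt N}e^{2\pi i j\pi(k)/N}$, so it is the standard $N\times N$ Hadamard matrix with its columns permuted by $\pi$, hence unitary; thus $A_{0}$ is spectral with spectrum $\Gamma_{0}$. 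Since replacing $(A_{0},\Gamma_{0})$ by $(N^{k}A_{0},N^{-k}\Gamma_{0})$ leaves the associated matrix unchanged, $A=N^{k}A_{0}$ is spectral.

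For the ``only if'' direction, assume $A$ is spectral with $|A|=N\in\{2,3,5\}$ and $0\in A$, and fix a spectrum $\Gamma$. Replacing $\Gamma$ by $\Gamma-\gamma_{0}$ for some $\gamma_{0}\in\Gamma$ only right-multiplies the Hadamard matrix by a diagonal unitary, so we may assume $0\in\Gamma$. By the classification quoted above, the Hadamard matrix of $(A,\Gamma)$ is equivalent to the standard one, so Theorem~\ref{th5.1} applies and yields $A=dA_{0}$ with $d\in\bz$ and $A_{0}$ a complete set of residues modulo $N$ satisfying $\gcd(A_{0})=1$ and $0\in A_{0}$. Since $N$ is prime, write $d=N^{k}m$ with $k\ge0$ and $\gcd(m,N)=1$; then $A=N^{k}(mA_{0})$, and because multiplication by $m$ permutes $\bz/N\bz$, the set $mA_{0}$ is again a complete set of residues modulo $N$ containing $0$. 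This is the asserted form (with $k\ge0$; the value $k=0$ does occur, e.g.\ $A=\{0,3\}$ for $N=2$).

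I do not expect a genuine obstacle inside the argument: once Theorem~\ref{th5.1} is in hand, everything reduces to splitting off the $N$-adic part of $d$ and the harmless normalization $0\in\Gamma$. The real content sits in the imported fact that all $5\times5$ complex Hadamard matrices are equivalent to the Fourier matrix---Haagerup's theorem---which is exactly what pins the corollary to $N=2,3,5$ and, among these, exploits that $2,3,5$ are prime so that the valuation bookkeeping goes through cleanly.
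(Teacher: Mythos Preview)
Your proposal is correct and follows the same route as the paper: invoke the classification of complex Hadamard matrices for $N=2,3,5$ (so that the hypothesis of Theorem~\ref{th5.1} is automatic) and read off the structure of $A$. The paper itself merely states that the corollary follows from this classification and Theorem~\ref{th5.1}; you have supplied the details the paper omits---the explicit ``if'' direction, the normalization $0\in\Gamma$, and the step of writing $d=N^{k}m$ with $\gcd(m,N)=1$ and absorbing $m$ into $A_{0}$ (which is where primality of $N$ is used). Your closing remark that the argument actually yields $k\ge 0$ rather than $k$ strictly positive is a genuine correction to the statement as printed: sets such as $A=\{0,3\}$ with $N=2$ are spectral but require $k=0$.
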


For cardinality $N=4$ the situation is more complex:
\begin{theorem}\label{thha4}
Let $A$ be spectral with spectrum $\Gamma$ and size $N=4$. Assume $0$ is in both sets. Then there exists a set of integers $L$, containing $0$, and an integer scaling factor $R$ so that $\Gamma= \frac{1}{R} L$.

$(A,L)$ is a Hadamard pair (each containing $0$) of integers of size $N=4$, with scaling factor $R$, if and only if $R=2^{C+M+a+1} d$, $A=2^C \{0, 2^a c_1, c_2, c_2 + 2^a c_3\}$, and $L=2^M \{0, n_1, n_1 + 2^a n_2, 2^a n_3\}$, where $c_i$ and $n_i$ are all odd, $a$ is a positive integer, $C$ and $M$ are non-negative integers, and $d$ divides $c_1 n$, $c_3 n$, $n_2 c$, and $n_3 c$, where $c$ is the greatest common divisor of the $c_k$'s and similarly for $n$.
\end{theorem}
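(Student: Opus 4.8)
\textbf{Step 1: the spectrum is rational.}
Since $0\in A\cap\Gamma$, the Hadamard matrix attached to $(A,\Gamma)$ is dephased, so the row indexed by any $\gamma\in\Gamma\setminus\{0\}$ is orthogonal to the all-ones row. Writing $A=\{0,a_1,a_2,a_3\}$ with $a_1,a_2,a_3\neq0$, this says $1+e^{2\pi i\gamma a_1}+e^{2\pi i\gamma a_2}+e^{2\pi i\gamma a_3}=0$. I would then use the following elementary fact: if $u_1,u_2,u_3,u_4$ are unimodular with $\sum_k u_k=0$, then also $\sum_k\prod_{j\neq k}u_j=\big(\prod_j u_j\big)\overline{\sum_k u_k}=0$, so the $u_k$ are the roots of an even polynomial $t^4+e_2t^2+e_4$ and therefore split into two antipodal pairs $\{r,-r,s,-s\}$. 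Hence one of the $e^{2\pi i\gamma a_j}$ equals $-1$, so $\gamma a_j\in\tfrac12+\bz$ and $\gamma=\frac{2k+1}{2a_j}\in\Q$. Thus $\Gamma\subset\Q$; picking $R\geq1$ in $\bz$ with $R\Gamma\subset\bz$ and putting $L:=R\Gamma$ establishes the first assertion and reduces the theorem to classifying the integer Hadamard pairs $(A,L)$ of size $4$ with $0\in A\cap L$ and scaling factor $R$.

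\textbf{Step 2: normal form of the Hadamard matrix.}
For such a pair, $H:=\big(e^{2\pi i\ell a/R}\big)_{\ell\in L,\,a\in A}$ is a complex Hadamard matrix of order $4$, so by the classification of order-$4$ complex Hadamard matrices (Haagerup; see \cite{Haa97,TaZy06}) it is equivalent to a member of the one-parameter family. Because $H$ is dephased and its only row (resp. column) with all entries equal is the all-ones one, the equivalence to the normal form involves no nontrivial diagonal factors, so after reindexing $L=\{0,\ell_1,\ell_2,\ell_3\}$ and $A=\{0,a_1,a_2,a_3\}$ one may assume
\[
H=\begin{pmatrix}1&1&1&1\\ 1&1&-1&-1\\ 1&-1&z&-z\\ 1&-1&-z&z\end{pmatrix},
\]
with $z=e^{2\pi i\ell_2a_2/R}$ a root of unity. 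Reading off the entries gives, modulo $R$,
\[
\ell_1a_1\equiv0,\qquad \ell_1a_2\equiv\ell_1a_3\equiv\ell_2a_1\equiv\ell_3a_1\equiv\tfrac R2,
\]
\[
\ell_2(a_2-a_3)\equiv\ell_3(a_2-a_3)\equiv(\ell_2-\ell_3)a_2\equiv(\ell_2-\ell_3)a_3\equiv\tfrac R2,\qquad \ell_2a_2\equiv\ell_3a_3 .
\]
Conversely, any integers $A,L,R$ for which $H$ has the displayed form constitute a Hadamard pair, since that matrix is unitary whenever $|z|=1$; so proving the theorem amounts to solving this congruence system.

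\textbf{Step 3: solving the system.}
I would treat the $2$-adic and the odd-primary parts separately. Each congruence ``$\equiv\tfrac R2$'' forces the relevant product to have $2$-adic valuation exactly $v_2(R)-1$, and $\ell_1a_1\equiv0$ forces $v_2(\ell_1a_1)\geq v_2(R)$. Comparing these relations yields $v_2(a_2)=v_2(a_3)=:C$, $v_2(\ell_2)=v_2(\ell_3)=:M$, a single integer $a\geq1$ with $v_2(a_1)=v_2(a_2-a_3)=C+a$ and $v_2(\ell_1)=v_2(\ell_2-\ell_3)=M+a$, and $v_2(R)=C+M+a+1$. This is exactly the assertion that $A=2^C\{0,2^ac_1,c_2,c_2+2^ac_3\}$, $L=2^M\{0,n_1,n_1+2^an_2,2^an_3\}$ with all $c_i,n_i$ odd, and $R=2^{C+M+a+1}d$ with $d$ the (necessarily odd) remaining factor. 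Substituting these shapes into the congruences and cancelling the powers of $2$ turns each congruence into a divisibility of the form $d\mid n_ic_j$ (for instance $\ell_2a_1\equiv\tfrac R2$ becomes $d\mid n_1c_1$ and $\ell_1a_1\equiv0$ becomes $d\mid n_3c_1$); using that $d$ is odd to combine ``$d\mid(n_1+2^an_2)c_j$'' with ``$d\mid n_1c_j$'' and the like, these collapse to precisely $d\mid c_1n$, $d\mid c_3n$, $d\mid n_2c$, $d\mid n_3c$ with $c=\gcd(c_1,c_2,c_3)$, $n=\gcd(n_1,n_2,n_3)$ (the remaining congruence $\ell_2a_2\equiv\ell_3a_3$ being automatically implied). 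The reverse implications are obtained by running the same computation backwards, which finishes both directions.

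\textbf{Expected main obstacle.}
The clean parts are Step 1 (the biquadratic observation) and the verification half of Step 3. The real work is the forward half of Step 3: one must check that the $2$-adic constraints leave exactly the stated one-parameter family of shapes --- in particular that a single exponent $a$ governs all four ``gaps'' in $A$ and $L$ --- and that the a priori numerous divisibility relations among the $n_i$ and $c_j$ collapse to exactly the four listed conditions, with no redundancy and nothing omitted. Keeping that bookkeeping straight, together with the labeling choices hidden in the normal form of Step 2, is the delicate point.
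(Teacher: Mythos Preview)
The paper does not actually prove Theorem~\ref{thha4}. Section~3 opens with ``Such sets were described in \cite{DH12}. We recall here the results:'' and then states Theorem~\ref{th5.1}, Corollary~\ref{pr5.1}, and Theorem~\ref{thha4} without proof; they are quoted from the companion paper \cite{DH12}. So there is no argument in the present paper to compare your proposal against.

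That said, a few comments on your outline. Step~1 is clean and correct: the ``four unimodular numbers summing to zero split into two antipodal pairs'' observation is exactly the right tool, and it immediately gives rationality of $\Gamma$. In Step~2, the sentence ``the equivalence to the normal form involves no nontrivial diagonal factors'' is doing real work and is not obviously true as stated: two dephased $4\times4$ Hadamard matrices can be equivalent via nontrivial diagonals followed by re-dephasing. What is true (and what you need) is that any dephased $4\times4$ Hadamard matrix is, after permuting rows and columns only, equal to your displayed normal form for some unimodular $z$; this follows directly from applying the antipodal-pairs fact to every row and every column, not from Haagerup's classification per se. You should state and use that sharper fact. In Step~3 your $2$-adic bookkeeping is on the right track, but note that your labeling $(\ell_1,\ell_2,\ell_3)$ does not line up with the theorem's ordering of $L=2^M\{0,n_1,n_1+2^an_2,2^an_3\}$ (for instance you claim $v_2(\ell_2)=v_2(\ell_3)$, whereas in the theorem's indexing the element $2^{M+a}n_3$ has strictly larger $2$-valuation than the others); this is only a relabeling issue, but since you flagged the bookkeeping as the delicate point, it is worth getting the correspondence between your normal-form indices and the theorem's parametrization explicit before declaring the divisibility conditions ``collapse to precisely'' the four listed ones.
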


The next proposition helps us simplify our study:

\begin{proposition}\label{pr5.5}
Let $A$ be a spectral set in $\bz$ with spectrum $\Gamma$, local translation group $U_\Gamma$ and local translation matrix $B$. Let $d\in\bz$ $d\geq 1$. Then $dA$ is spectral with spectrum $\frac1d\Gamma$. The local translation group $U_{\frac1d\Gamma}$ and the local translation matrix $B_{\frac1d\Gamma}$ are related to the corresponding ones for $A$ and $\Gamma$ by
\begin{equation}
U_{\frac1d\Gamma}(t)=U_\Gamma(\frac td),\quad (t\in\br),\quad B_{\frac1d\Gamma}^d=B_\Gamma
\label{eq5.5.1}
\end{equation}
\end{proposition}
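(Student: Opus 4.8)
The plan is to observe that dilating by $d$ does not change the Hadamard matrix of the pair, only relabels its rows and columns, and then to carry this relabeling through Definition~\ref{def2.4}, in particular through formula~\eqref{eq2.4.4}.

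First I would check that $\frac1d\Gamma$ is a spectrum for $dA$: the Hadamard matrix associated to $(dA,\frac1d\Gamma)$ has entry $e^{2\pi i(\lambda/d)(da)}=e^{2\pi i\lambda a}$ at the slot indexed by $a\in A$, $\lambda\in\Gamma$, so it is literally the Hadamard matrix of $(A,\Gamma)$ with the index sets renamed, hence unitary; thus $dA$ is spectral with spectrum $\frac1d\Gamma$, which also lets us speak of $U_{\frac1d\Gamma}$ and $B_{\frac1d\Gamma}$. Next I would make the relabeling explicit: let $V\colon l^2(A)\to l^2(dA)$ be the unitary $V\delta_a=\delta_{da}$ and $W\colon l^2(\Gamma)\to l^2(\frac1d\Gamma)$ the unitary $W\delta_\lambda=\delta_{\lambda/d}$. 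A direct computation on the canonical vectors then gives the two intertwining relations $\F_{dA,\frac1d\Gamma}\,V=W\,\F_{A,\Gamma}$ (because the column of $\F_{dA,\frac1d\Gamma}$ indexed by $da$ equals, entry by entry, the column of $\F_{A,\Gamma}$ indexed by $a$) and $W^*D_{\frac1d\Gamma}(t)W=D_\Gamma(t/d)$ (because the $\lambda/d$-entry of $D_{\frac1d\Gamma}(t)$ is $e^{2\pi i(\lambda/d)t}=e^{2\pi i\lambda(t/d)}$).

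Substituting these into \eqref{eq2.4.4} yields the first identity:
\[
U_{\frac1d\Gamma}(t)=\F_{dA,\frac1d\Gamma}^*\,D_{\frac1d\Gamma}(t)\,\F_{dA,\frac1d\Gamma}
=V\,\F_{A,\Gamma}^*\,\big(W^*D_{\frac1d\Gamma}(t)W\big)\,\F_{A,\Gamma}\,V^*
=V\,\F_{A,\Gamma}^*\,D_\Gamma(t/d)\,\F_{A,\Gamma}\,V^*=V\,U_\Gamma(t/d)\,V^*,
\]
which is $U_{\frac1d\Gamma}(t)=U_\Gamma(t/d)$ once $l^2(A)$ and $l^2(dA)$ are identified via $V$. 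For the matrix identity I would then simply use that $U_\Gamma$ is a one-parameter group: $B_{\frac1d\Gamma}=U_{\frac1d\Gamma}(1)=U_\Gamma(1/d)$, so $B_{\frac1d\Gamma}^d=U_\Gamma(1/d)^d=U_\Gamma(1)=B_\Gamma$. The only thing requiring a little care — not really an obstacle — is the bookkeeping with the canonical unitaries $V$ and $W$ identifying $l^2(A)\cong l^2(dA)$ and $l^2(\Gamma)\cong l^2(\frac1d\Gamma)$; once one agrees to suppress them, every step above is a one-line substitution and the group law of $U_\Gamma$ does the rest.
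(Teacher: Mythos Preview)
Your argument is correct and is precisely the ``simple calculation'' from \eqref{eq2.4.4} that the paper invokes as its entire proof; you have merely made the relabeling unitaries $V,W$ explicit where the paper suppresses them. There is no substantive difference in approach.
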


\begin{proof}
Everything follows from \eqref{eq2.4.4} by a simple calculation.
\end{proof}

\medskip

{\bf N=2.} We can take $A=2^c\{0,a_0\}$, with $c\in\bz$, $c\geq0$ and $a_0$ odd, and $\Gamma=\{0,\gamma_1=\frac g{2^{c+1}}\}$ with $g$ odd. The matrix of the Fourier transform is 
$$\F=\frac{1}{\sqrt2}\begin{pmatrix} 1&1\\1&-1\end{pmatrix}.$$
By equation \eqref{eq2.4.4}, we can compute the local translation matrix $B$ and the local translation group $U_\Gamma$:
\beq 
B =\frac12 \begin{pmatrix}
  1&1\\
  1&-1
\end{pmatrix}
\begin{pmatrix}
  1&0\\
  0&e^{ \pi i  \gamma_1 }
\end{pmatrix}
\begin{pmatrix}
  1&1\\
  1& -1
\end{pmatrix} .
\eeq
Here $\gamma_1$ depends on the non-zero element of $A$, called $a_1$. Let $a_1 = 2^c a_0$, where $a_0$ is odd. Then $\gamma_1 = \frac{g}{2^{c+1}}$, where $g$ is odd.
Multiplying, we obtain
\beq 
B = \frac12 \begin{pmatrix}
  1+e^{ \pi i  \gamma_1 }&1-e^{ \pi i  \gamma_1 }\\
  1-e^{ \pi i  \gamma_1 }&1+e^{ \pi i  \gamma_1 }
\end{pmatrix} .
\eeq
We also have
\beq 
U_\Gamma(t) = \frac12 \begin{pmatrix}
  1+e^{t \pi i  \gamma_1}&1-e^{t \pi i  \gamma_1}\\
  1-e^{t \pi i  \gamma_1}&1+e^{t \pi i  \gamma_1}
\end{pmatrix} .
\eeq 

Note that when $c=0$,
$$B=\begin{pmatrix} 0&1\\1&0\end{pmatrix}.$$
In this case $\Theta_B=\bz$. 
\medskip

{\bf N=3.} We can take $A=3^c\{0,3j_1+1,3j_2+2\}$ and $\Gamma=\{0,\gamma_1=\frac{3j_3+1}{3^{c+1}},\gamma_2=\frac{3j_4+2}{3^{c+1}}\}$. The matrix of the Fourier transform is, with $\omega=e^{\frac{2\pi i}{3}}$:
$$\F=\frac{1}{\sqrt3}\begin{pmatrix} 1&1&1\\
1&\cj\omega&\omega\\
1&\omega&\cj\omega\end{pmatrix}.$$
We compute the group of local translations:
\beq 
U_\Gamma(t) = \frac13
\begin{pmatrix}
  1 &1 &1\\
  1& {\omega} & \cj\omega \\
  1 & \cj\omega & {\omega}
\end{pmatrix}
\begin{pmatrix}
  1 &0 &0\\
  0& e^{2 \pi i \gamma_1 t} & 0 \\
  0 & 0 & e^{2 \pi i \gamma_2 t}
\end{pmatrix}
 \begin{pmatrix}
  1 &1 &1\\
  1& \cj\omega & {\omega} \\
  1 & {\omega} & \cj\omega
\end{pmatrix}.
\eeq
Multiplying, we obtain
{\small
$$U_\Gamma(t)=\frac13\begin{pmatrix}
1+e^{2\pi i\gamma_1t}+e^{2\pi i\gamma_2t}& 1+\cj\omega e^{2\pi i\gamma_1t}+\omega e^{2\pi i \gamma_2t}& 1+\omega e^{2\pi i\gamma_1t}+\cj\omega e^{2\pi i\gamma_2t}\\
1+\omega e^{2\pi i\gamma_1 t}+\cj\omega e^{2\pi i\gamma_2 t}& 1+e^{2\pi i\gamma_1 t}+e^{2\pi i \gamma_2 t}& 1+\cj \omega e^{2\pi i\gamma_1 t}+\omega e^{2\pi i\gamma_2 t}\\
1+\cj\omega e^{2\pi i\gamma_1 t}+\omega e^{2\pi i\gamma_2 t}& 1+ \omega e^{2\pi i\gamma_1 t}+\cj\omega e^{2\pi i \gamma_2 t}& 1+e^{2\pi i\gamma_1 t}+e^{2\pi i\gamma_2 t}
\end{pmatrix}
$$
}

Note that, when $c=0$, 
$$B=U_\Gamma(1)=\begin{pmatrix} 0&1&0\\
0&0&1\\
1&0&0
\end{pmatrix}.$$
In this case $\Theta_B=\bz$.

\medskip

{\bf N=4.} We take a simple case to obtain some nice symmetry, so we will ignore, after some rescaling, the common factor. So take $A=\{0,2^a c_1 , c_2 , c_2 + 2^a c_3 \}$, $\Gamma=\frac{1}{2^{a+1}} \{0,2^a n_1 , n_2 , n_2 + 2^a n_3 \}$ as in Theorem \ref{thha4}. The matrix of the Fourier transform is 
\beq
\frac12 \begin{pmatrix}
1& 1 & 1 & 1 \\
1& 1 & -1 & -1 \\
1& -1 & \rho & -\rho \\
1& -1 & -\rho & \rho \\
\end{pmatrix},
\eeq
where $\rho = \text{exp}\left(-\frac{ \pi i c_2 n_2}{2^a} \right)$. We compute integers powers of the spectral matrix $B$:
\beq
B^k = \frac14 \begin{pmatrix}
1& 1 & 1 & 1 \\
1& 1 & -1 & -1 \\
1& -1 & \rho & -\rho \\
1& -1 & -\rho & \rho \\
\end{pmatrix}^*
 \begin{pmatrix}
1& 0 & 0 & 0 \\
0& (-1)^k & 0 & 0 \\
0& 0 & z^k & 0 \\
0& 0 & 0 & (-z)^k \\
\end{pmatrix}
 \begin{pmatrix}
1& 1 & 1 & 1 \\
1& 1 & -1 & -1 \\
1& -1 & \rho & -\rho \\
1& -1 & -\rho & \rho \\
\end{pmatrix},
\eeq
where $z=\text{exp}\left(\frac{ \pi i n_2}{2^{a}}\right)$.

We obtain for odd $k$,
\beq
B^k =\frac12 \begin{pmatrix}
0& 0 & 1+z^k \rho & 1-z^k \rho \\
0& 0 & 1-z^k \rho & 1+z^k \rho \\
1+z^k \overline{\rho} & 1-z^k \overline{\rho} & 0 & 0 \\
1-z^k \overline{\rho} & 1+z^k \overline{\rho} & 0 & 0 \\
\end{pmatrix}.
\eeq
We obtain for even $k$,
\beq
B^k =\frac12 \begin{pmatrix}
1+z^k  & 1-z^k  & 0 & 0 \\
1-z^k  & 1+z^k  & 0 & 0 \\
0& 0 & 1+z^k  & 1-z^k  \\
0& 0 & 1-z^k  & 1+z^k  \\
\end{pmatrix}.
\eeq
We compute $\Theta_B$. We have $k\in\Theta_B$ if and only if one of the following situations occurs: $z^k=-1$, $z^k\rho=\pm 1$, $z^k\cj\rho=\pm1$. This means that 
$\frac{kn_2}{2^a}=2m+1$  or $\frac{kn_2}{2^a}\pm \frac{c_2n_2}{2^a}=m$ for some $m\in\bz$. Since $n_2$ is odd, this implies that
$$\Theta_B=\{2^a(2m+1) :  m\in\bz\}\cup\{ 2^am\pm c_2 : m\in\bz\}.$$

Then, one can easily see that $\T:=\{0,2,4,\dots, 2^a-2\}$ satisfies the conditions in Propositon \ref{pr3.9}, and therefore $A\oplus \T=\bz_{2^{a+1}}$. 
\begin{example}
Let $A=\{0,1,4,5\}$ and $\Gamma=\frac18\{0,1,4,5\}$. We illustrate how the group of local translations acts on an indicator function.

\begin{figure}[htb]
\begin{center}
\includegraphics[scale=0.2]{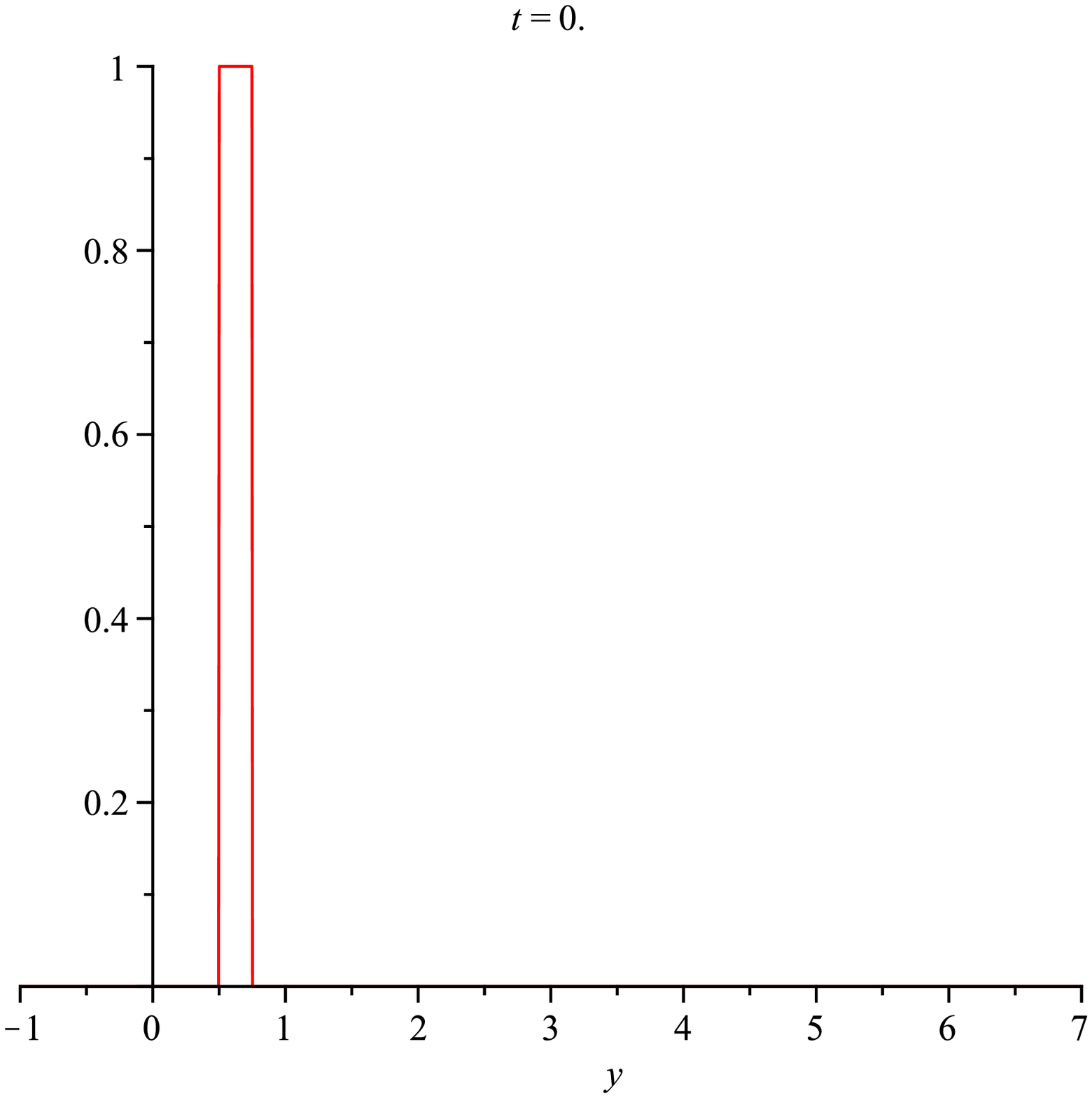}
\includegraphics[scale=0.2]{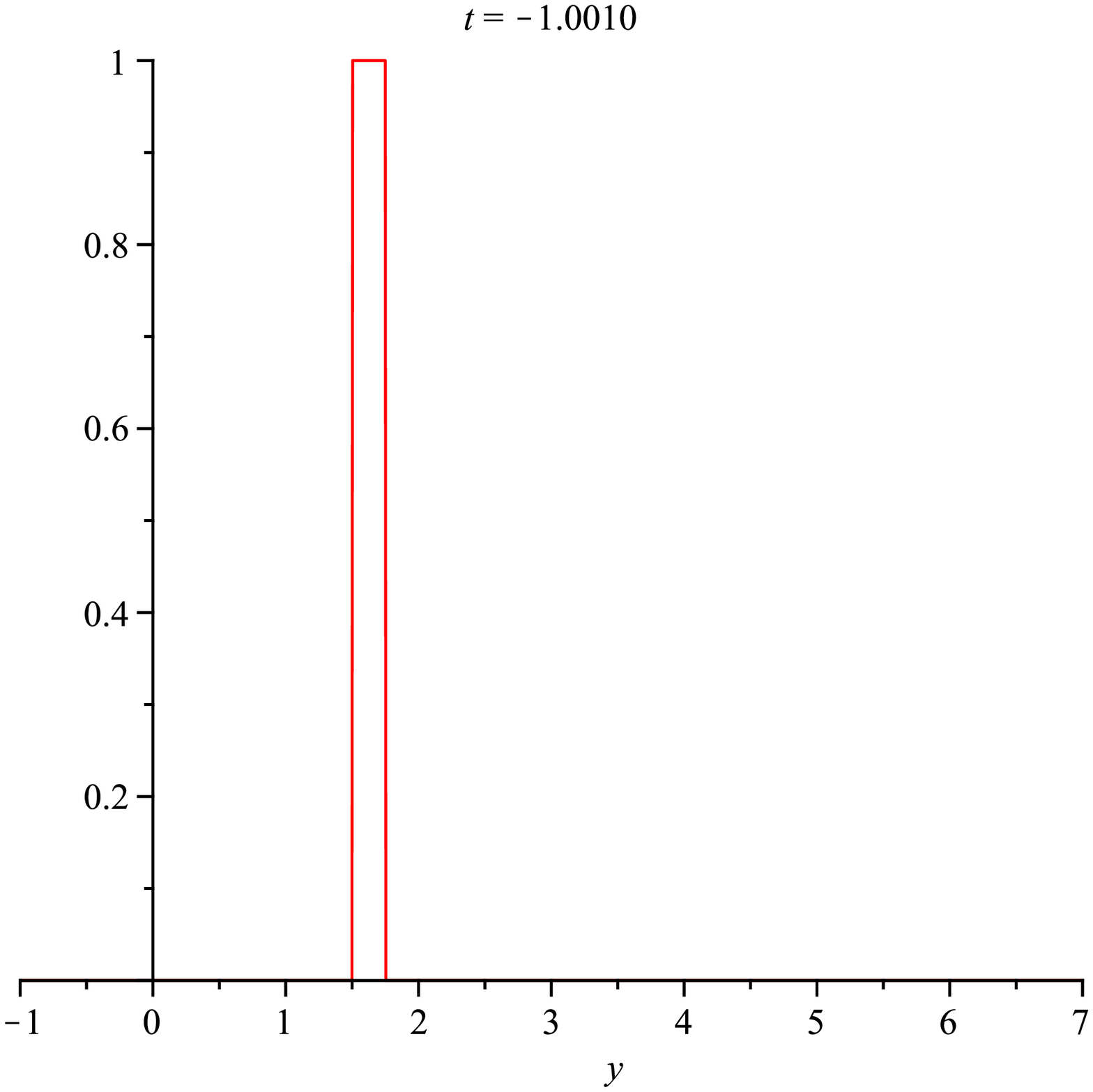}
\includegraphics[scale=0.2]{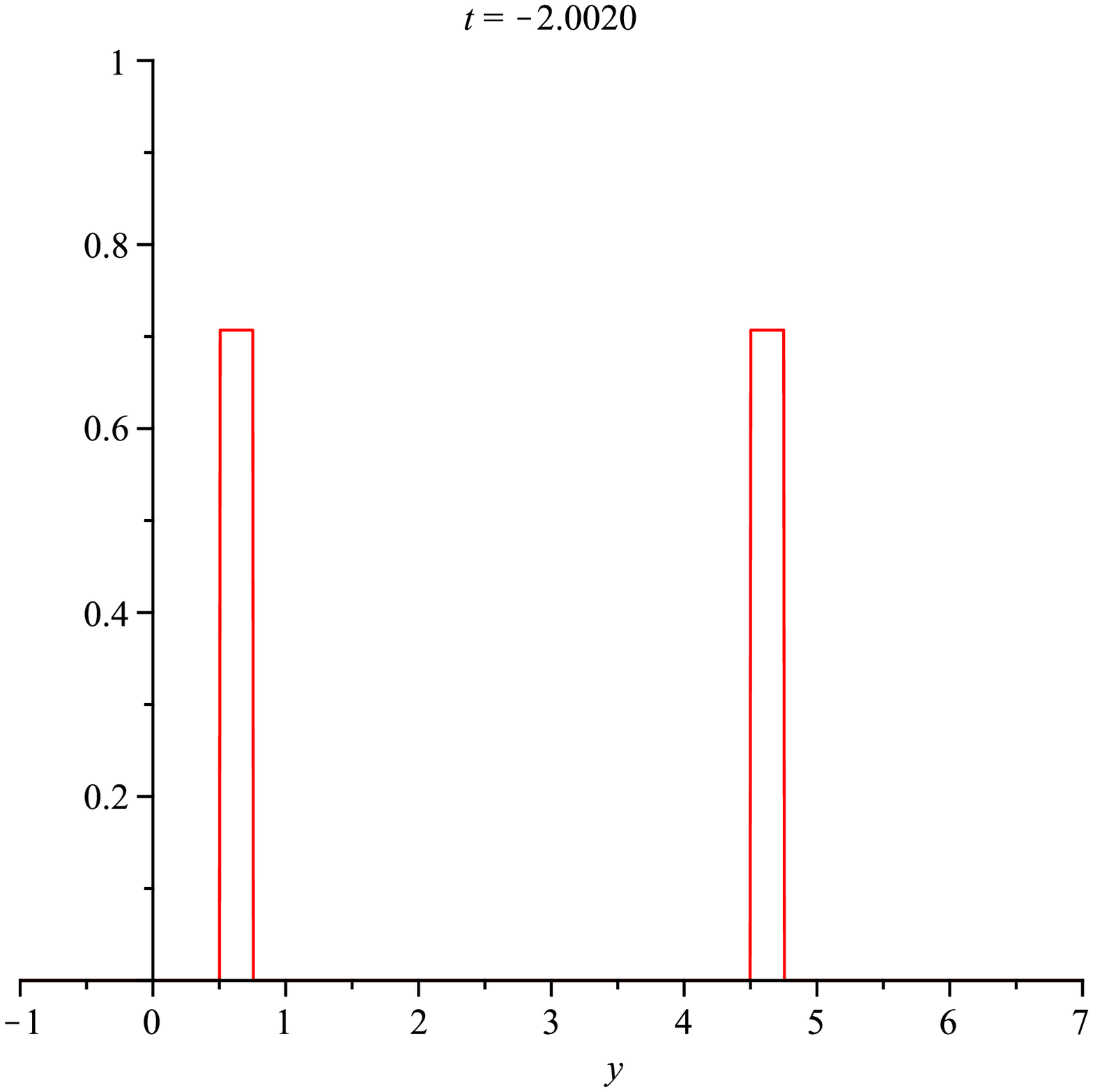}

\includegraphics[scale=0.2]{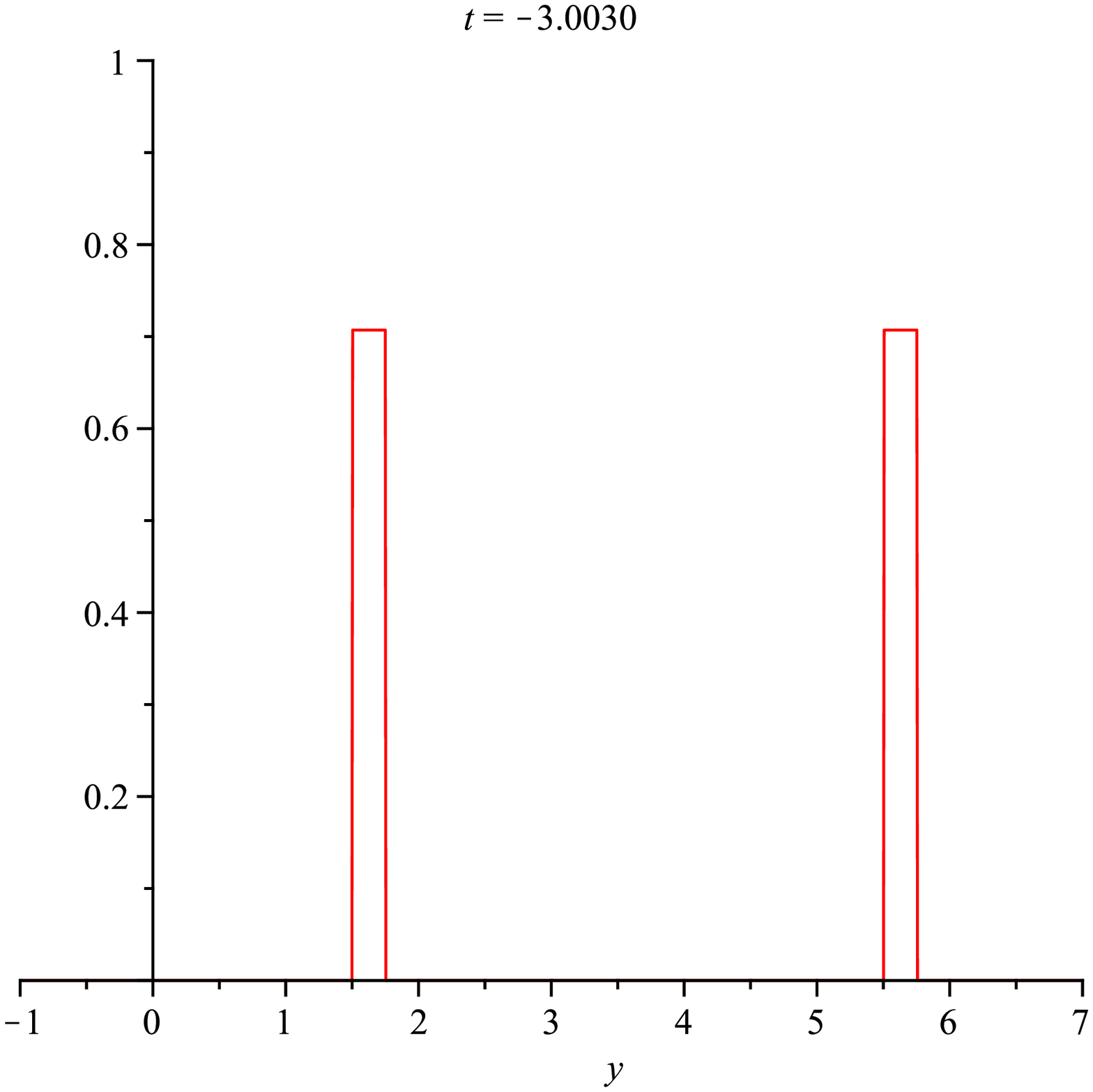}
\includegraphics[scale=0.2]{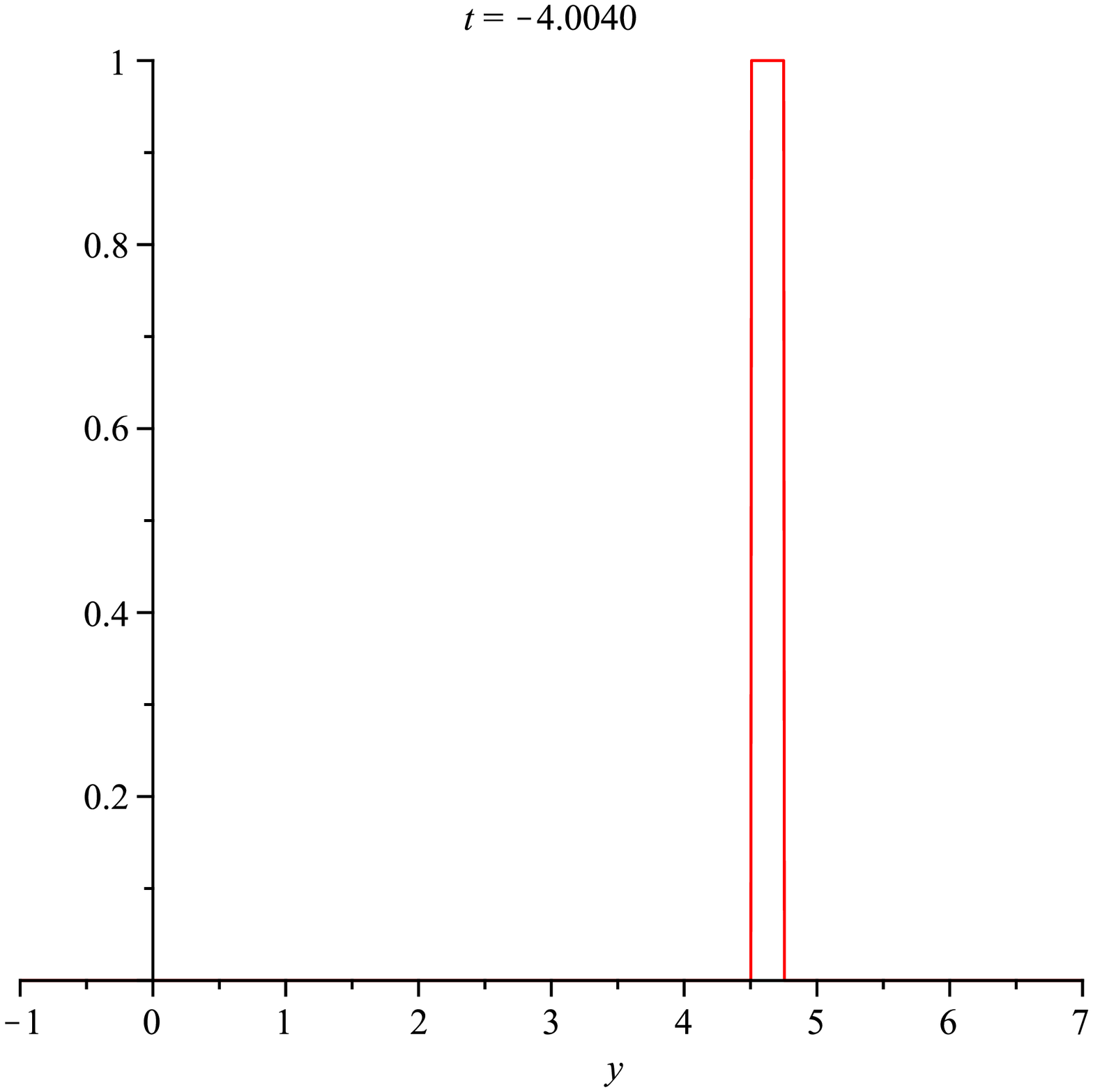}
\includegraphics[scale=0.2]{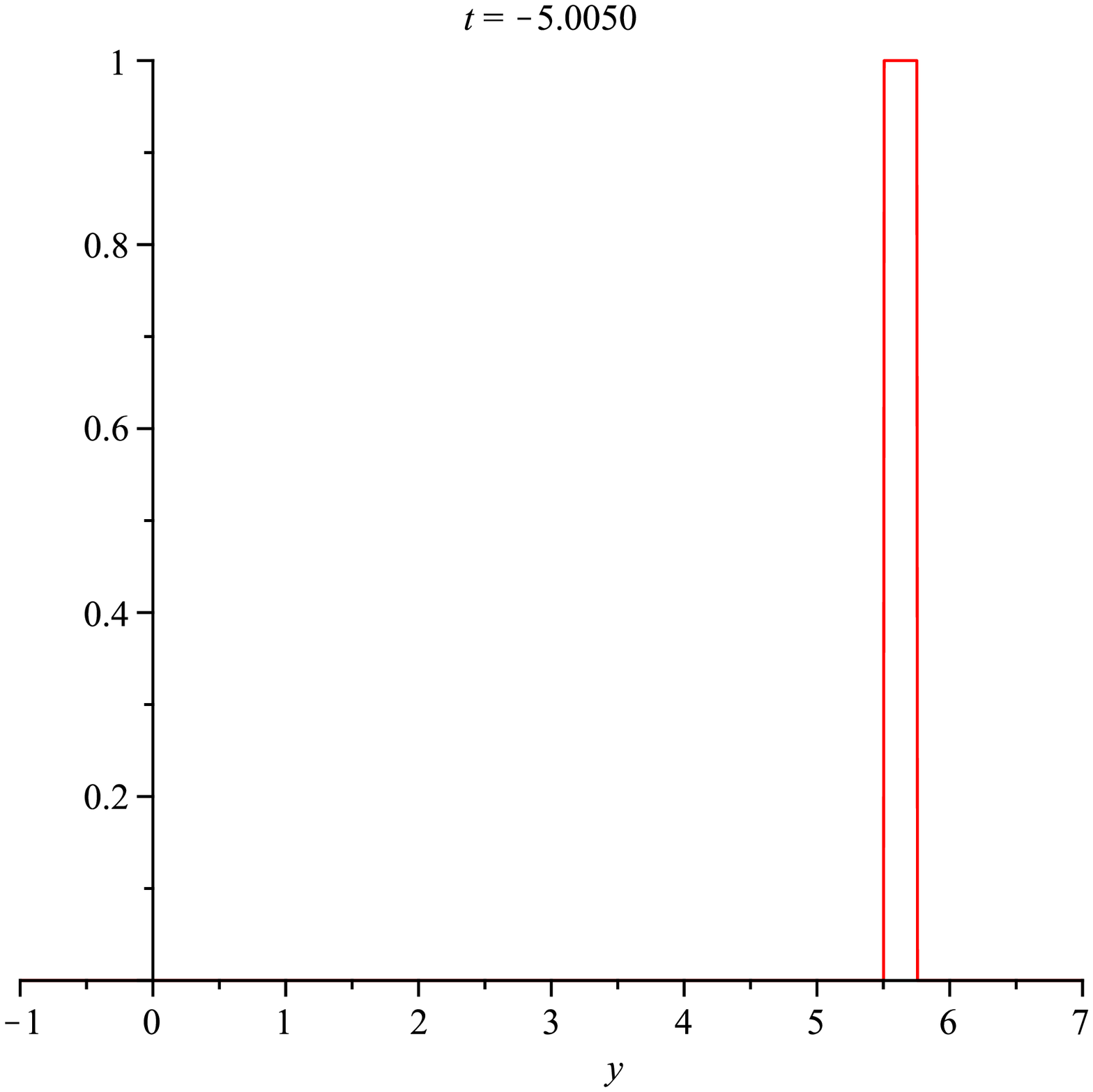}

\includegraphics[scale=0.2]{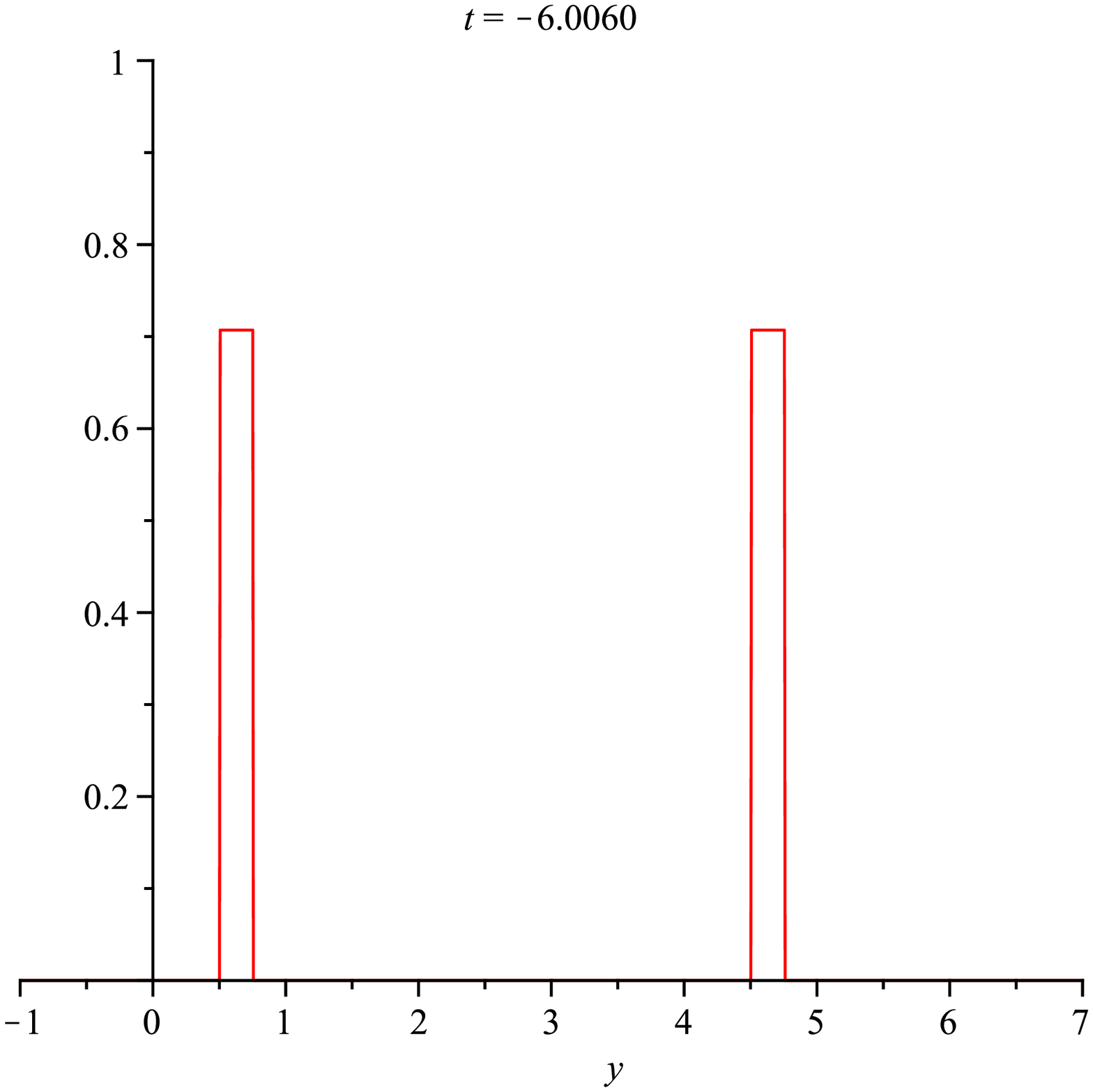}
\includegraphics[scale=0.2]{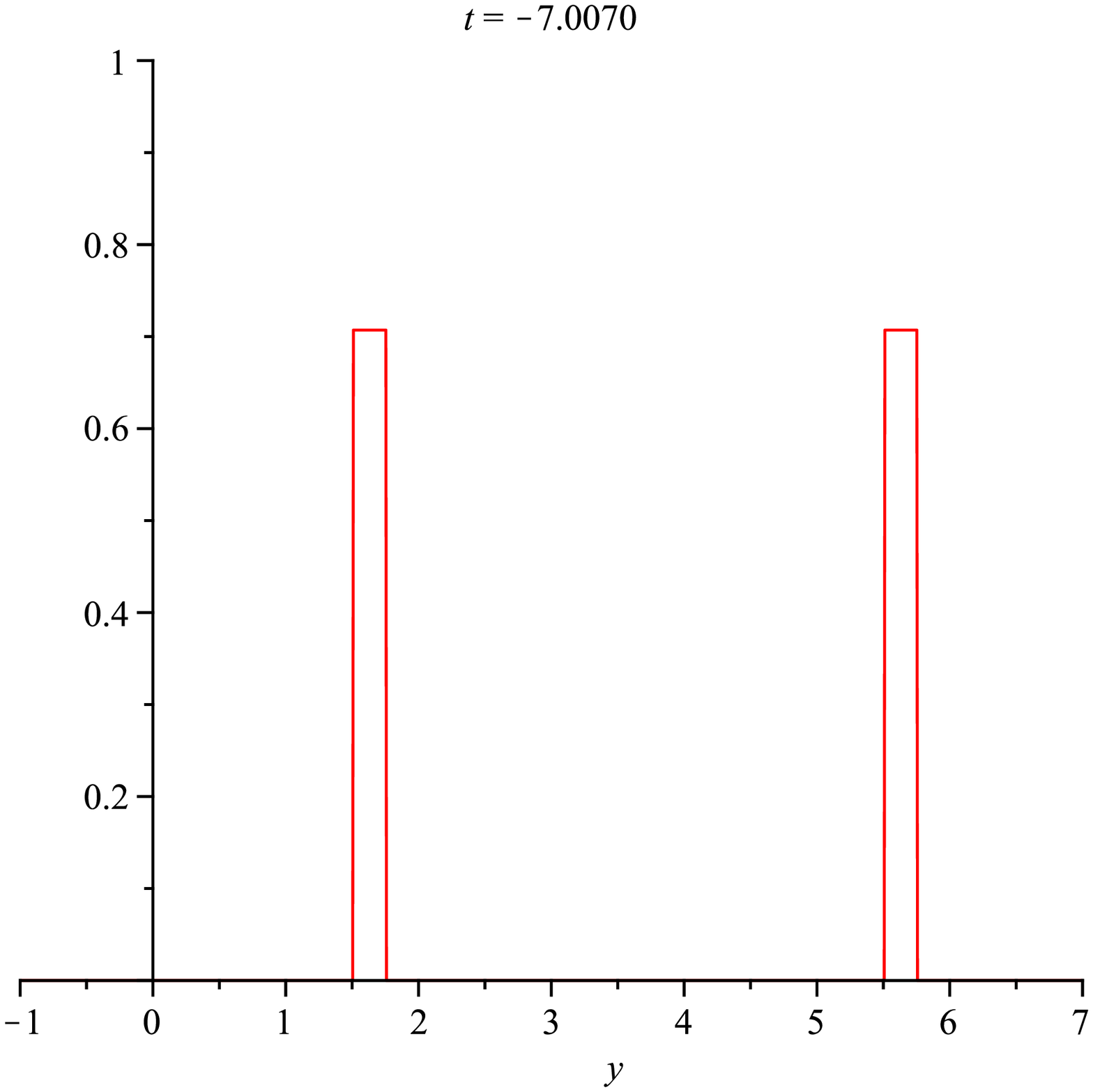}
\includegraphics[scale=0.2]{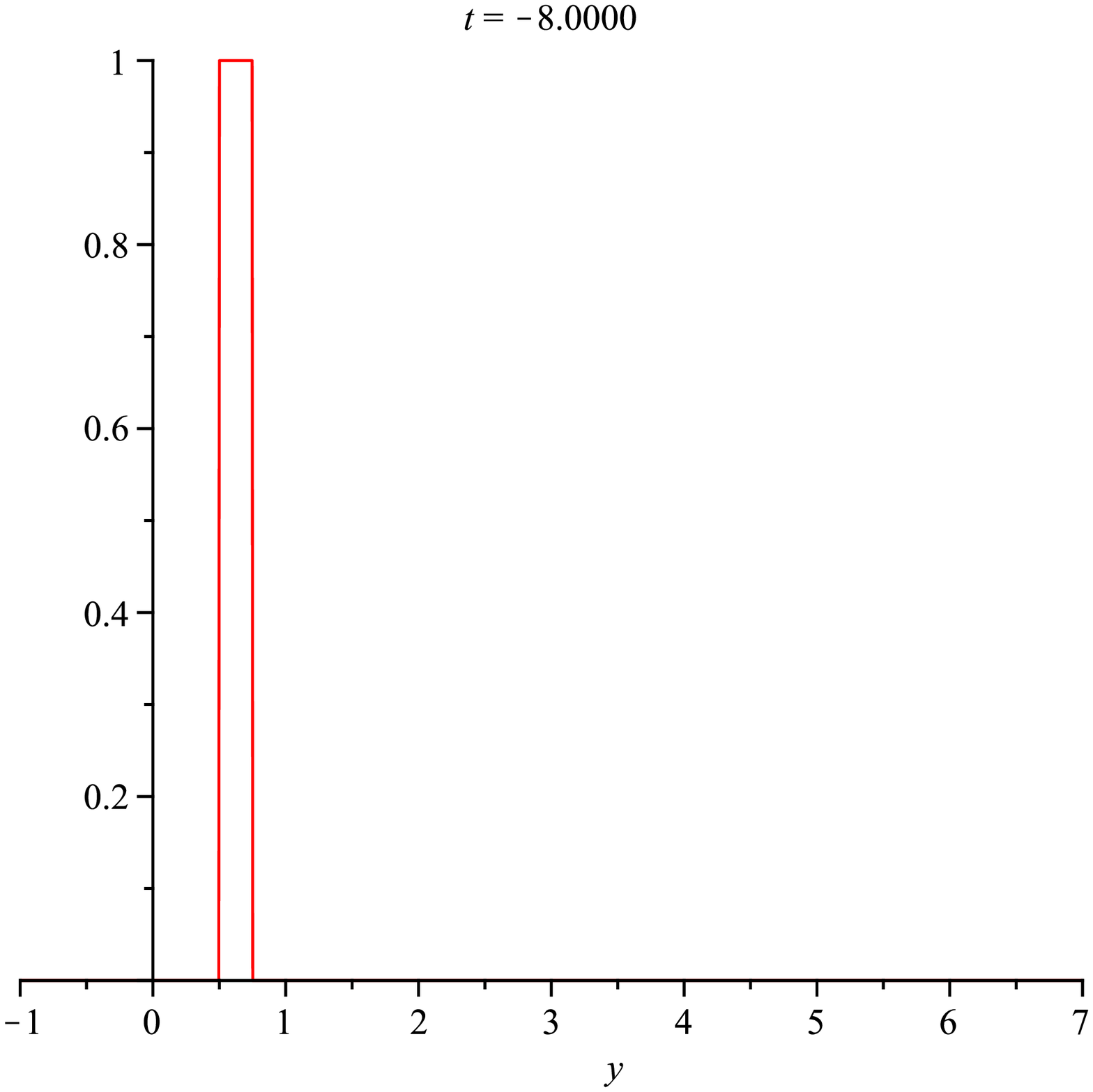}
\end{center}
\caption{Local translations for $A=\{0,1,4,5\}$ and $\Gamma=\frac18\{0,1,4,5\}$. }
\end{figure}
The indicator function $f$ is the one in the first picture, for $t=0$. We use negative values for $t$ to move the function to the right. We show here the absolute value of $U(t)f$. Note that for $t\approx -1, -4$ and $-5$, since the the interval $[0,1]$ is moved into the intervals $[1,2]$, $[4,5]$ $[5,6]$, which are contained in $A+[0,1]$, as predicted by the theory, e.g., Proposition \ref{pr1.4}, the group $U(t)$ really acts a simple translation. 

For $t\approx 2$, the interval $[0,1]+2$ is no longer contained in $A+[0,1]$. The local translation $U(-2)$ splits the indicator function into 2 pieces, supported on $[0,1]$ and $[4,5]$. Similarly for $t\approx -3,-6,-7$. Since $\Gamma$ is contained in $\frac18\bz$, the group of local translations has period 8. We see this in the last picture $U(-8)f=f$.
\end{example}
\medskip

{\bf N=5.} For simplicity, by rescaling we can ignore the common factors in $A$ and $\Gamma$ so we take $A=\{0,a_1,a_2,a_3,a_4\}$ with $a_j\equiv j\mod5$ and $\Gamma=\frac15\{0,\gamma_1,\gamma_2,\gamma_3,\gamma_4\}$ with $\gamma_j\equiv j\mod 5$. Then the matrix of the Fourier transform is 

$$\F=\frac{1}{\sqrt 5}\left(e^{2\pi \frac {jk}5}\right)_{j,k=0}^4.$$
The local translation matrix is
$$B=\begin{pmatrix}
0&1&0&0&0\\
0&0&1&0&0\\
0&0&0&1&0\\
0&0&0&0&1\\
1&0&0&0&0\end{pmatrix}.$$
In this case $\Theta_B=\bz$. 
\begin{acknowledgements}
This work was partially supported by a grant from the Simons Foundation (\#228539 to Dorin Dutkay).
\end{acknowledgements}

%end stuff added by john
\bibliographystyle{alpha}	
\bibliography{eframes}

\begin{thebibliography}{JPT12d}

\bibitem[DH12]{DH12}
Dorin~Ervin Dutkay and John Haussermann.
\newblock Tiling properties of spectra of measures.
\newblock {\em preprint}, 2012.

\bibitem[DJ12a]{DJ12}
Dorin~Ervin Dutkay and Palle~E.T. Jorgensen.
\newblock On the universal tiling conjecture in dimension one.
\newblock {\em preprint}, 2012.

\bibitem[DJ12b]{DJ12u}
Dorin~Ervin Dutkay and Palle~E.T. Jorgensen.
\newblock Unitary groups and spectral sets.
\newblock {\em preprint}, 2012.

\bibitem[DL13]{DL13}
Dorin~Ervin Dutkay and Chun-Kit Lai.
\newblock Some reductions of the spectral set conjecture to integers.
\newblock {\em to appear in Math Proc Cambridge Phil Soc}, 2013.

\bibitem[Fug74]{Fug74}
Bent Fuglede.
\newblock Commuting self-adjoint partial differential operators and a group
  theoretic problem.
\newblock {\em J. Functional Analysis}, 16:101--121, 1974.

\bibitem[Haa97]{Haa97}
Uffe Haagerup.
\newblock Orthogonal maximal abelian {$*$}-subalgebras of the {$n\times n$}
  matrices and cyclic {$n$}-roots.
\newblock In {\em Operator algebras and quantum field theory ({R}ome, 1996)},
  pages 296--322. Int. Press, Cambridge, MA, 1997.

\bibitem[JPT12a]{JPT12a}
Palle Jorgensen, Steen Pedersen, and Feng Tian.
\newblock Translation representations and scattering by two intervals.
\newblock {\em J. Math. Phys.}, 53(5):053505, 49, 2012.

\bibitem[JPT12b]{JPTc}
Palle~E.T. Jorgensen, Steen Pedersen, and Feng Tian.
\newblock Momentum operators in two intervals: Spectra and phase transition.
\newblock {\em Complex Analysis and Operator Theory, Online First(tm), 28 March
  2012}, 2012.

\bibitem[JPT12c]{JPTd}
Palle~E.T. Jorgensen, Steen Pedersen, and Feng Tian.
\newblock Restrictions and extensions of semibounded operators.
\newblock {\em Complex Analysis and Operator Theory, Online First(tm), 12 June
  2012}, 2012.

\bibitem[JPT12d]{JPTa}
Palle~E.T. Jorgensen, Steen Pedersen, and Feng Tian.
\newblock Spectral theory of multiple intervals.
\newblock {\em arXiv1202.4120}, 2012.

\bibitem[KM06a]{KM06}
Mihail~N. Kolountzakis and M{\'a}t{\'e} Matolcsi.
\newblock Complex {H}adamard matrices and the spectral set conjecture.
\newblock {\em Collect. Math.}, (Vol. Extra):281--291, 2006.

\bibitem[KM06b]{KoMa06}
Mihail~N. Kolountzakis and M{\'a}t{\'e} Matolcsi.
\newblock Tiles with no spectra.
\newblock {\em Forum Math.}, 18(3):519--528, 2006.

\bibitem[Ped87]{Ped87}
Steen Pedersen.
\newblock Spectral theory of commuting selfadjoint partial differential
  operators.
\newblock {\em J. Funct. Anal.}, 73(1):122--134, 1987.

\bibitem[Tao04]{Tao04}
Terence Tao.
\newblock Fuglede's conjecture is false in 5 and higher dimensions.
\newblock {\em Math. Res. Lett.}, 11(2-3):251--258, 2004.

\bibitem[T{\.Z}06]{TaZy06}
Wojciech Tadej and Karol {\.Z}yczkowski.
\newblock A concise guide to complex {H}adamard matrices.
\newblock {\em Open Syst. Inf. Dyn.}, 13(2):133--177, 2006.

\end{thebibliography}

\end{document}